\newtheorem{theorem}{Theorem}[section]
\newtheorem{lemma}[theorem]{Lemma}
\newtheorem{definition}{Definition}
\newtheorem{conjecture}{Conjecture}
\title{Edge Disjoint Caterpillar Realizations}
\author[$1,2,3$]{Istv\'an Mikl\'os} 
\author[$1,4$]{Geneva Schlafly} 
\author[$1,5$]{Yuheng Wang}
\author[$1,6$]{Zhangyang Wei}
\affil[$1$]{Budapest Semesters in Mathematics\\ 1071 Budapest, Bethelen G\'abor t\'er 2\\ Hungary}
\affil[$2$]{R\'enyi institute\\ 1053 Budapest, Re\'altanoda u. 13-15\\ Hungary}
\affil[$3$]{SZTAKI\\ 1111 Budapest, L\'agym\'anyosi u. 11\\ Hungary}
\affil[$4$]{University of California, Santa Barbara, CA 93106, USA}
\affil[$5$]{Carleton College, Northfield, MN 55057, USA}
\affil[$6$]{Carnegie Mellon University,  5000 Forbes Ave, Pittsburgh, PA 15213, USA}
\begin{document}

\maketitle

\begin{abstract}
    In this paper, we consider the edge disjoint caterpillar realizations of tree degree sequences. We give the necessary and sufficient conditions when two tree degree sequences have edge disjoint caterpillar realizations. We conjecture that an arbitrary number of tree degree sequences have edge disjoint realizations if every vertex is a leaf in at most one tree. We prove that the conjecture is true if the number of tree degree sequences is at most $4$. We also prove that the conjecture is true if $n \ge \max\{22k-11, 396\}$, where $n$ is the number of vertices and $k$ is the number of tree degree sequences.
\end{abstract}

\section{Introduction}
A degree sequence $D=d_1,d_2,\ldots d_n$ is a series of non-negative integers. A degree sequence is graphical if there is a vertex labeled graph $G$ in which the degrees of the vertices are exactly $D$. Such graph $G$ is called a realization of $D$. The color degree matrix problem, also known as an edge disjoint realization, edge packing or graph factorization problem, is the following: Given a $k \times n$ degree matrix $\mathbf{D}=\{\{d_{1,1},d_{1,2},\ldots d_{1,n} \},\{d_{2,1},d_{2,2},…d_{2,n} \},\ldots \{d_{k,1},d_{k,2},…d_{k,n}\}\}$, in which each row of the matrix is a degree sequence, decide if there is an ensemble of edge disjoint realizations of the degree sequences. Such a set of edge disjoint graphs is called a realization of the degree matrix. A realization can also be presented as an edge colored simple graph, in which the edges with a given color form a realization of the degree sequence in a given row of the color degree matrix.

The existence problem in general is a hard computational problem for any $k \ge 2$ \cite{bentzetal2009}. However, it is easy for some special cases. One special case is when the degree matrix is very sparse, the total sum of the degrees is at most $2n-1$, where $n$ is the number of vertices. In that case, necessary and sufficient conditions exist for realizing a colored degree matrix with a colored forest \cite{h-mcdc}. Another interesting case is when each degree sequence is a degree sequence of a tree. We will call these tree degree sequences. 
Kundu proved that two tree degree sequences have edge disjoint tree realizations if and only if the sum of the degree sequences is graphical \cite{kundu2trees}. He also proved that a similar statement is not true for three degree sequences. He gave an example of three tree degree sequences such that the sum of any two of them is graphical and the sum of all three degree sequences is graphical, but the degree sequences do not have edge disjoint tree realizations \cite{kundu3trees}. On the other hand, he proved that three tree degree sequences always have edge disjoint tree realizations if the minimum sum of the degrees on each vertex is at least $5$ \cite{kundu3trees}. This condition includes the case when each vertex is a leaf in at most one of the trees. We conjecture that a degree matrix always has edge disjoint caterpillar realizations if each row is a tree degree sequence and each vertex is a leaf in at most one of the trees.

In this paper we prove that this conjecture holds when the number of degree sequences is at most $4$ or the number of vertices is at least $\max\{22k-11, 396\}$, where $k$ is the number of rows in the tree degree matrix. Furthermore, we give a necessary and sufficient condition when two tree degree sequences have edge disjoint caterpillar realizations.

\section{Preliminaries}
In this section, we give some formal definitions and lemmas we use throughout the paper. First, we formally define degree sequences and degree matrices, along with the different types of realizations we consider in this paper.
\begin{definition}
A degree sequence $D = d_1, d_2, \ldots d_n$ is a list of non-negative integers. A degree sequence is \emph{graphical} if there exist a simple graph $G$ whose degrees are exactly $D$. Such a graph is a \emph{realization} of $D$. A degree sequence $D$ is a \emph{tree degree sequence} if each degree is positive and $\sum_{i =1}^n d_i = 2n-2$. 

A degree $1$ vertex is called a \emph{leaf}. A degree sequence is a \emph{path degree sequence} if it has exactly two leaves. A realization of a tree degree sequence is called a \emph{caterpillar} if its non-leaf vertices form a path. This path of non-leaf vertices is called the \emph{backbone}.
\end{definition}

\begin{definition}
A matrix  $\mathbf{D}=\{\{d_{1,1},d_{1,2},\ldots d_{1,n} \},\{d_{2,1},d_{2,2},…d_{2,n} \},\ldots \{d_{k,1},d_{k,2},…d_{k,n}\}\}$ of non-negative integers is called a \emph{degree sequence matrix}. 

A degree sequence matrix is a \emph{tree degree sequence matrix} if each row is a tree degree sequence. A tree degree matrix has no common leaves if for each $i,j,l$, $d_{i,j} = 1 \implies d_{l,j} \ne 1$.

An edge colored simple graph $G$ is called a \emph{realization} of  a degree matrix $\mathbf{D} \in \mathbb{N}^{k\times n}$, if it is colored with $k$ colors, and for each color $c_i$, the subgraph containing the edges with color $c_i$ is a realization of the $i^{\mathrm{th}}$ row of $\mathbf{D}$. A realization is called \emph{caterpillar realization} if for each color, the corresponding subgraph is a caterpillar.

\end{definition}

The Erd{\H o}s-Gallai theorem gives necessary and sufficient conditions when a degree sequence is graphical.
\begin{theorem}{\bf  \cite{eg}} \label{theo:E-G} A degree sequence $f_1 \ge f_2 \ge \ldots \ge f_n$ is graphical if and only if the sum of the degrees is even, and for each $1 \le s \le n$ the inequality
\begin{equation}
\sum_{i = 1}^s f_i \le s(s-1) + \sum_{j=s+1}^n \min\{s, f_j\}\label{eq:E-G}
\end{equation}
holds.
\end{theorem}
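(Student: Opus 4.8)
The plan is to prove the two directions separately, since necessity is a short double-counting argument while sufficiency is the substantive half.

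For \emph{necessity}, I would start from any realization $G$ of $f_1 \ge f_2 \ge \cdots \ge f_n$. The parity condition is immediate from the handshake lemma. To obtain \eqref{eq:E-G} for a fixed $s$, let $S$ be the set of the $s$ vertices of largest degree and write $\sum_{i=1}^s f_i = \sum_{v \in S}\deg_G(v)$. I would then split this sum according to whether an incident edge lies inside $S$ or crosses to $V \setminus S$: the at most $\binom{s}{2}$ edges within $S$ are each counted twice, contributing at most $s(s-1)$, while each vertex $j \notin S$ is counted once for every neighbor it has in $S$, and that number is at most $\min\{s, f_j\}$ because $j$ has only $s$ possible neighbors in $S$ and degree $f_j$ in total. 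Summing the second contribution over $j > s$ reproduces exactly the right-hand side.

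For \emph{sufficiency} I would argue by induction on the number of edges $m = \tfrac12\sum_{i=1}^n f_i$, the case $m=0$ being the empty graph. Given a sequence satisfying the parity condition and all of \eqref{eq:E-G}, the idea is to delete one edge at the level of degree sequences: choose a suitable pair of positions — for instance the least index $p$ with $f_p > f_{p+1}$ (setting $f_{n+1}=0$) and the greatest index $q$ with $f_q>0$ — and decrement both $f_p$ and $f_q$ by one. One checks that the resulting sequence $f'$ is again non-increasing and has even sum, applies the inductive hypothesis to get a realization $G'$ of $f'$, and then inserts the edge $\{p,q\}$ (performing a single edge exchange first if $p$ and $q$ happen to be adjacent in $G'$) to recover a realization of the original sequence.

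The main obstacle is the verification that this reduction preserves all the inequalities \eqref{eq:E-G}, which is where the careful choice of $p$ and $q$ pays off; it amounts to a case analysis on the cut index $s$ relative to $p$ and $q$, tracking separately how decrementing an entry affects the left-hand partial sum and how it affects the truncated terms $\min\{s, f_j\}$ on the right. A secondary nuisance is guaranteeing that $\{p,q\}$ can actually be realized as a fresh edge, i.e.\ handling the adjacency exchange cleanly. An alternative to the entire sufficiency argument is a Havel--Hakimi style construction that attaches the maximum-degree vertex to the next-highest vertices and recurses, but justifying that this step preserves graphicality rests on an edge-swapping lemma of similar difficulty, so I would prefer the self-contained induction on the edge count.
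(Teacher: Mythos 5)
The paper itself gives no proof of this statement: it is quoted as the classical Erd\H{o}s--Gallai theorem with a citation to \cite{eg}, so there is no internal argument to compare yours against; your proposal has to be judged on its own. Your necessity direction is complete and correct: parity comes from the handshake lemma, and splitting $\sum_{i=1}^{s} f_i$ into twice the number of edges inside the top-$s$ set (at most $s(s-1)$) plus the edges leaving it (at most $\min\{s,f_j\}$ for each outside vertex $j$) is exactly the standard double count.

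The sufficiency half, however, is a strategy rather than a proof, and the gaps sit at precisely the points you wave at. First, your reduction can collide: if all positive entries are equal (e.g.\ $f=(2,2,2)$, or any such sequence with trailing zeros), then the least index $p$ with $f_p>f_{p+1}$ and the greatest index $q$ with $f_q>0$ are the \emph{same} index, so ``decrement $f_p$ and $f_q$'' is not even defined; this regular case needs separate treatment. Second, the assertion that the decremented sequence still satisfies every inequality in (\ref{eq:E-G}) is the actual mathematical content of the theorem. Your outline is essentially Choudum's published induction on $\sum_i f_i$, and in that proof nearly all of the work is exactly this case analysis on $s$ relative to $p$ and $q$; acknowledging it as ``the main obstacle'' does not discharge it. Third, the claim that ``a single edge exchange'' fixes the case $v_pv_q\in E(G')$ is not automatic: there are degree sequences, e.g.\ $(2,2,1,1)$, in which two prescribed vertices are adjacent in \emph{every} realization, so the existence of a 2-switch freeing the pair $\{v_p,v_q\}$ must be proved, and any such proof has to exploit the specific choice of $p$ (maximal degree) and $q$ (minimal positive degree) or the E-G conditions on the original sequence; as written it is an unsupported assertion. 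So the skeleton follows a known correct route, but until these three points are carried out the proposal does not constitute a proof.
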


We refer to the inequalities in Equation~\ref{eq:E-G} as Erd{\H o}s-Gallai inequalities, or E-G inequalities for short.

When a degree sequence is a sum of tree degree sequences, then only the first few Erd{\H o}s-Gallai inequalities must be checked, as the following lemma states.
\begin{lemma} {\bf \cite{ghm}}\label{lem:k-tree-EG}
Let $F = f_1 \ge f_2 \ge \ldots \ge f_n$ be the sum of $k$ arbitrary tree degree sequences. Then the Erd{\H o}s-Gallai inequalities in (\ref{eq:E-G})
holds for any $s \ge 2k$.
\end{lemma}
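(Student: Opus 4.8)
The plan is to prove Lemma~\ref{lem:k-tree-EG} by exploiting the fact that the total degree sum of $k$ tree degree sequences is exactly $k(2n-2)$, together with the sharp structure of the Erd{\H o}s-Gallai inequality when $s$ is large. First I would fix an arbitrary $s \ge 2k$ and split the right-hand side sum $\sum_{j=s+1}^n \min\{s, f_j\}$ according to whether $f_j \ge s$ or $f_j < s$. The key observation is that because each $f_i$ is a sum of $k$ degrees, each at most $n-1$ in a tree but more usefully each at least $1$, we have $f_i \ge k$ for every $i$ (every vertex has degree at least $1$ in each tree). Dually, the number of indices with large $f_i$ is tightly controlled by the total sum $k(2n-2)$. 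The aim is to show the inequality holds term-by-term or via a global counting bound rather than checking it directly.

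Concretely, I would argue as follows. Write the E-G inequality~\eqref{eq:E-G} at threshold $s$ as $\sum_{i=1}^s f_i - \sum_{j : j>s,\,f_j \ge s} s - \sum_{j: j > s,\, f_j < s} f_j \le s(s-1)$. The left side can be rewritten using the full degree sum: since $\sum_{i=1}^n f_i = k(2n-2)$, I would bound $\sum_{i=1}^s f_i$ above by $k(2n-2)$ minus the contribution of the tail, and then show that for $s \ge 2k$ the dominant term $s(s-1)$ on the right already absorbs everything. The cleanest route is probably to use that a sorted sum of $k$ tree sequences cannot have too many large entries: if $m$ denotes the number of indices $i$ with $f_i \ge s$, then $ms \le \sum f_i = k(2n-2)$, so $m \le k(2n-2)/s \le k(2n-2)/(2k) = n-1$, and one sharpens this to relate $m$ and $s$. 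I would then feed these counts into the rearranged inequality, using $s \ge 2k$ at exactly the point where a factor of $2$ is needed to turn a bound of the form $2ks$ or $k\cdot(\text{something})$ into $s^2$.

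The main obstacle I anticipate is handling the middle range of indices $j > s$ with $f_j < s$ cleanly, since there $\min\{s,f_j\} = f_j$ and the contribution depends on the actual values, not just on counts. I expect the decisive inequality to come from comparing $\sum_{i=1}^s f_i$ against $s(s-1) + \sum_{j>s}\min\{s,f_j\}$ by adding and subtracting $\sum_{j>s} f_j$: this converts the statement into showing $\sum_{i=1}^s f_i - \sum_{j>s} f_j \le s(s-1) + \sum_{j>s}(\min\{s,f_j\} - f_j)$, where the last sum is $\le 0$ and concentrated on the large-$f_j$ indices. Since $\sum_{i=1}^s f_i \le k(2n-2)$ and $\sum_{j>s} f_j \ge (n-s)\cdot k$ (each of the $n-s$ tail entries is at least $k$), the left side is at most $k(2n-2) - k(n-s) = k(n-2+s)$, and one checks $k(n-2+s) \le s(s-1)$ for all relevant $s$ in the range $2k \le s \le n$, using that $s \ge 2k$ forces $ks \le s^2/2$. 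I would verify the boundary case $s = 2k$ separately to confirm the constant is sharp, and note that for $s$ close to $n$ the inequality becomes slack, so the binding constraint is precisely at the lower end $s = 2k$, which is why the lemma is stated with that threshold.
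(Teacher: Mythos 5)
Your two ingredients are the right ones---each column sum satisfies $f_j \ge k$ since every tree contributes degree at least $1$, and $\sum_{i=1}^n f_i = k(2n-2)$---and the lemma does follow from these two facts alone (the paper itself gives no proof of this statement, citing \cite{ghm}). However, your closing argument fails at two points. First, your plan silently drops the term $\sum_{j>s}\bigl(\min\{s,f_j\}-f_j\bigr)$ from the right-hand side after observing that it is $\le 0$; that is backwards, because deleting a non-positive quantity from the side you must stay below makes the target inequality strictly stronger, so verifying the truncated inequality proves nothing about the original one. Moreover this term can be genuinely negative: take $k$ caterpillars whose degree-$3$ vertices all occupy the same $2k+1$ columns, so that $f_{2k+1}=3k>s$ when $s=2k$. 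Second, even the truncated inequality you propose to check, $k(n+s-2)\le s(s-1)$, is false whenever $n>2k$: at $s=2k$ it reduces to $n+2k-2\le 4k-2$, i.e.\ $n\le 2k$, and its left side grows linearly in $n$ while its right side does not depend on $n$ at all. The root cause is that the tail contribution $k(n-s)$ belongs on the right-hand side, as a lower bound for $\sum_{j>s}\min\{s,f_j\}$; by moving $\sum_{j>s}f_j$ across the inequality you spent that quantity shrinking the left side instead, and then the $n$-dependence of the left side has nothing to cancel against.

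The repair keeps exactly your two facts but makes the comparison directly, with no rearrangement. Since $f_j\ge k$ for every $j$ and $s\ge 2k\ge k$, every tail term obeys $\min\{s,f_j\}\ge k$, so the right-hand side of (\ref{eq:E-G}) is at least $s(s-1)+k(n-s)$. On the left, $\sum_{i=1}^s f_i=k(2n-2)-\sum_{j>s}f_j\le k(2n-2)-k(n-s)=k(n+s-2)$. Hence it suffices to show $k(n+s-2)\le s(s-1)+k(n-s)$, in which the $kn$ terms cancel, leaving $2k(s-1)\le s(s-1)$, which holds precisely because $s\ge 2k$. Equality at $s=2k$ confirms your intuition that the threshold is sharp; the flaw in your write-up was purely in where the tail sum was spent.
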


In this paper, we will need a stronger statement summarized in the following lemma.
\begin{lemma}\label{lem:tree+pathEG}
Let $D$ be a $2\times n$ tree degree matrix, in which the second row is a path degree sequence. If $n \ge 6$, then the E-G inequalities for the summed degree sequence $f_j := d_{1,j}+d_{2,j}$ hold for any $s \ge 2$.
\end{lemma}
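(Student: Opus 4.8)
The plan is to reduce the claim to the two remaining inequalities and verify them by hand. By Lemma~\ref{lem:k-tree-EG} applied with $k=2$, the E-G inequality~(\ref{eq:E-G}) already holds for every $s \ge 4$, so it suffices to check $s=2$ and $s=3$. The whole argument rests on two structural observations. First, since the second row is a path degree sequence, it has exactly two leaves; the remaining $n-2$ entries are each at least $2$ and sum to $(2n-2)-2 = 2(n-2)$, so each of them equals exactly $2$. Hence every $d_{2,j} \in \{1,2\}$, and consequently $f_j = d_{1,j} + d_{2,j} \ge 2$, with $f_j = 2$ forcing $d_{1,j} = d_{2,j} = 1$. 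In particular, an index with $f_j = 2$ is a common leaf, so there are at most two such indices in total. Second, for the first-row tree I would record the bound that the sum of the $t$ largest degrees is at most $n + t - 2$: if $S$ is the set of those $t$ vertices, the tree edges inside $S$ number at most $t-1$, so $\sum_{v \in S} \deg(v) = 2e(S) + (\text{edges leaving } S) \le 2e(S) + (n-1-e(S)) = n - 1 + e(S) \le n + t - 2$.

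With these in hand the case $s=2$ is immediate. Because every $f_j \ge 2$, we have $\min\{2, f_j\} = 2$, so the right-hand side of~(\ref{eq:E-G}) equals $2 + 2(n-2) = 2n-2$. For the left-hand side, the two indices realizing $f_1, f_2$ contribute at most $2+2=4$ from the path row and at most $n$ from the tree row (the two largest tree degrees), giving $f_1 + f_2 \le n + 4$. Since $n \ge 6$ yields $n + 4 \le 2n - 2$, the inequality holds.

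For $s = 3$ I would count how many small entries appear on the right. Writing $p$ for the number of indices $j \ge 4$ with $f_j = 2$, the right-hand side of~(\ref{eq:E-G}) becomes $6 + [2p + 3(n-3-p)] = 3n - 3 - p$, and by the first observation $p \le 2$. On the left, the three indices realizing $f_1, f_2, f_3$ contribute at most $6$ from the path row and at most $n+1$ from the tree row (the three largest tree degrees), so $f_1 + f_2 + f_3 \le n + 7$. The required inequality $n + 7 \le 3n - 3 - p$ is equivalent to $2n \ge 10 + p$, which holds for $n \ge 6$ precisely because $p \le 2$.

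The main obstacle is the $s=3$ case at the boundary $n = 6$: here the crude lower bound of $2$ on each $\min\{3, f_j\}$ is not enough, and one genuinely needs the observation that the path row contributes at most two common leaves, so at most two of the summed entries can equal $2$. This is exactly where the path hypothesis (rather than an arbitrary second tree) is used, and it is what makes the threshold $n \ge 6$ tight for the argument.
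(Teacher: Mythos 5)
Your proof is correct, but it is organized differently from the paper's. You dispose of all $s \ge 4$ by citing Lemma~\ref{lem:k-tree-EG} with $k=2$ and then check $s=2$ and $s=3$ by hand; the paper never invokes that lemma here and instead proves every case $s \ge 3$ uniformly, bounding the left-hand side above by $n+3s-2$ and the right-hand side below by $s(s-1)+3(n-s)-2$, which reduces the claim to the quadratic inequality $s^2-7s+2n = (s-3)(s-4)+(2n-12) \ge 0$. Both arguments hinge on the same structural fact, which you isolate cleanly: since the path row has exactly two entries equal to $1$ and the tree row is everywhere positive, at most two columns have sum $2$ and all others have sum at least $3$; your $s=2$ case is essentially identical to the paper's. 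What your route buys is modularity and a finite amount of checking (two explicit values of $s$, each a short count); what the paper's route buys is self-containedness (no appeal to the $k$-tree E-G lemma) and a single computation covering all $s \ge 3$ at once, at the tight boundary $(s-3)(s-4)=0$, $n=6$. Two small remarks: your bound on the sum of the $t$ largest tree degrees does not need a realization argument --- since the other $n-t$ entries are at least $1$ and the total is $2n-2$, the $t$ largest sum to at most $n+t-2$ directly; and at $s=3$ your left-hand bound $6+(n+1)=n+7$ coincides with the paper's $n+3s-2$, so the two proofs meet at exactly the same tight inequality $n+7 \le 3n-5$.
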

\begin{proof}
Notice that the sum of a tree degree sequence is $2n-2$, the sum of the remaining $n-2$ degrees is at least $n-2$. Also, a path degree sequence does not have a degree larger than $2$. Therefore, when $s=2$, the left-hand side of the E-G inequality is bounded above by
$$
f_1 + f_2 \le 2n-2 -(n-2) +2 \times 2 = n + 4,
$$
The right-hand side is precisely
$$
2 + \sum_{j=3}^n \min\{2,f_j\} = 2 +2(n-2) = 2n-2,
$$
since each $f_j$ is at least $2$. Then it is sufficient to show that
$$
n+4 \le 2n-2
$$
which holds when $6\le n$.

When $s\ge 3$, we have on the left-hand side of the E-G inequality that
$$
\sum_{i=1}^s f_i \le 4n-4 - 3(n-s) +2 = n + 3s -2,
$$
since the total sum of the degrees is $4n-4$, and every column sum is at least $3$, except at most two of them. For similar reasons, we have the lower bound of the right-hand side of the E-G inequality:
$$
s(s-1) + 3(n-s) -2 \le s(s-1) + \sum_{j = s+1}^n  \min\{s, f_j\}.
$$
Therefore, the inequality holds as long as
$$
n + 3s -2 \le s(s-1) + 3(n-s) -2, 
$$
that is,
$$
0 \le s^2 - 7 s + 2n = s^2 - 7s +12 +x = (s-3)(s-4) +x,
$$
where $x \ge 0$. Since $s\ge 3$, we have that the inequality holds.
\end{proof}

In this paper, we are interested in the caterpillar realizations of tree degree matrices. Our main conjecture is the following:
\begin{conjecture}\label{con:caterpillar}
Let $D=\{\{d_{1,1},d_{1,2},\ldots d_{1,n} \},\{d_{2,1},d_{2,2},…d_{2,n} \},\ldots \{d_{k,1},d_{k,2},…d_{k,n}\}\}$ be a tree degree matrix without common leaves.
Then $D$ has a caterpillar realization.
\end{conjecture}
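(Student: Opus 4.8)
The plan is to prove the conjecture by splitting each target caterpillar into its \emph{backbone} and its \emph{leaf edges}, and by building all $k$ caterpillars simultaneously rather than one color at a time (which already rules out a naive induction on $k$, since an arbitrary realization of the first $k-1$ colors need not leave room for the $k$th). For each color $i$ write $I_i = \{v : d_{i,v}\ge 2\}$ for the internal vertices and $L_i = \{v : d_{i,v}=1\}$ for the leaves. The no-common-leaves hypothesis says the sets $L_1,\dots,L_k$ are pairwise disjoint, so every vertex is internal in all but at most one color and $\sum_i |L_i|\le n$. Since each tree has at least two leaves, this forces $2k\le n$, i.e. $k\le \lfloor n/2\rfloor$, which is exactly the density threshold at which the $k(n-1)$ required edges can fit disjointly into the $\binom{n}{2}$ available slots. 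A caterpillar of color $i$ then consists of a path $P_i$ spanning $I_i$ (the backbone) plus, for each $u\in I_i$, exactly $r_{i,u}:=d_{i,u}-\deg_{P_i}(u)$ leaf edges to vertices of $L_i$, where $\deg_{P_i}(u)\in\{1,2\}$. Using $\sum_{u\in I_i} d_{i,u}=n-2+|I_i|$ for a tree degree sequence, one checks $r_{i,u}\ge 0$ and $\sum_{u\in I_i} r_{i,u}=|L_i|$, so the color-$i$ leaf assignment is a bipartite degree-prescribed packing between $I_i$ (with degrees $r_{i,\cdot}$) and $L_i$ (with all degrees $1$).

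First I would carry out the backbone packing. Because each $I_i$ omits only the $|L_i|$ vertices of $L_i$, and the $L_i$ are disjoint, the backbones are nearly spanning paths of $K_n$; the natural scaffold is the classical Walecki decomposition of $K_n$ into $\lfloor n/2\rfloor$ edge-disjoint Hamiltonian paths (Hamiltonian cycles when $n$ is odd), of which we need only $k\le\lfloor n/2\rfloor$. I would take $k$ of these and reroute (bypass) each one around the few $L_i$-vertices it must avoid, producing a path on $I_i$; the pairwise disjointness of the $L_i$ is what keeps each bypass local and prevents the shortcut edges of different colors from clashing.

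Next I would solve the leaf attachment as one global edge-packing problem. With the backbones fixed, all leaf edges together with all backbone edges must form a simple graph, i.e. each unordered pair may be used by at most one color. I would model color $i$ as a bipartite $(I_i,L_i)$ graph with the prescribed degrees above, verify each piece is individually realizable by the Gale--Ryser theorem, and then realize the whole family \emph{jointly and edge-disjointly} by a Hall-type / network-flow argument that routes the leaves while respecting the edges already spent on backbones and on other colors' leaves. Lemma~\ref{lem:k-tree-EG} is useful here because it reduces the necessary feasibility check on the summed degree sequence to finitely many Erd{\H o}s--Gallai inequalities, and the slack implied by $k\le \lfloor n/2\rfloor$ is what should keep the flow feasible.

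The main obstacle is precisely the coordination of these two stages in the dense regime $k\approx n/2$ with small $n$, where there is essentially no spare edge capacity: backbones chosen for mutual disjointness may strand the leaf-attachment problem, and a leaf assignment may block a needed backbone bypass, so the two stages cannot be decoupled. I expect the resolution to be an exchange argument that alternately reroutes a backbone and reassigns leaves, showing that any local collision can be removed by a degree- and disjointness-preserving swap. In the complementary sparse regime $n\ge\max\{22k-11,396\}$ the slack is abundant and the construction goes through directly, so the real content of the full conjecture is to confine the difficulty to the finite dense family and then eliminate it, using the structural fact that every vertex is internal in all but at most one color to bound how badly the backbones and leaf edges can interfere.
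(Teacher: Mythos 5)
There is a genuine gap, and it is exactly where your proposal concedes one. The statement you are proving is the paper's open Conjecture~\ref{con:caterpillar}: the paper itself establishes it only for $k\le 4$ (induction via column removal plus rainbow matchings, Lemmas~\ref{lem:caterpillar-induction} and~\ref{lem:rainbow-k-4}) and for $n\ge\max\{22k-11,396\}$ (a two-phase construction that builds all legs and the three shortest backbones first, then completes each remaining backbone as a Hamiltonian path in the complement graph by a capping argument in the style of Ore and Palmer). Your sketch does not close the remaining range either: the entire dense regime is deferred to an unspecified ``exchange argument that alternately reroutes a backbone and reassigns leaves,'' with no invariant, no termination measure, and no reason a local swap cannot cascade. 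That deferred step is not a finishing detail; it is the open content of the conjecture, and a proof proposal that ends with ``I expect the resolution to be an exchange argument'' has not proved the statement.

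Two of the concrete steps are also unsound as stated. First, the Walecki rerouting: when color $i$ bypasses $v\in L_i$ by replacing $(u,v),(v,w)$ with the chord $(u,w)$, the disjointness of $L_1,\dots,L_k$ constrains only which \emph{vertices} each color skips, not which \emph{chords} are created; the chords all live among vertices that are internal for several colors simultaneously, so a chord of color $i$ can coincide with a Walecki edge or a chord of color $j$, and nothing in your argument excludes this. The tightness of the regime makes this fatal rather than fixable: at $n=2k$ one has $k(n-1)=\binom{n}{2}$, so the $k$ caterpillars must decompose $K_n$ exactly, with zero slack for rerouting --- the case $n=2k$ is solvable only because no common leaves then forces every row to be a path degree sequence, which is precisely Lemma~\ref{lem:allpaths} (the Walecki construction itself \cite{alspach2008}), and just above $n=2k$ high-degree rows appear while the slack is still essentially nil. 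Second, the leaf-attachment stage is not a bipartite flow problem: Gale--Ryser feasibility of each color's $(I_i,L_i)$ piece, or the summed Erd\H{o}s--Gallai inequalities via Lemma~\ref{lem:k-tree-EG}, are necessary conditions only, while the binding constraint --- each vertex pair used by at most one color across all backbones and all leaf edges --- is exactly the color degree matrix packing problem, which is hard already for $k\ge 2$ in general \cite{bentzetal2009}. Any successful argument must exploit the caterpillar structure itself, which is what the paper's rainbow-matching induction and its legs-first, backbones-second Hamiltonian-path phase do; note that the paper's large-$n$ proof runs your two stages in the opposite order and needs $n\ge\max\{22k-11,396\}$ precisely to control the few high-degree columns (at most one of summed degree $\ge\frac{2n}{3}$, at most $11$ of degree $\ge\frac{n}{6}$) that your sketch does not address at all.
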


A special case is when the degree matrix contains path degree sequences without common leaves. It is well known that such degree matrices have caterpillar realizations, formally stated and proved in the following lemma:
\begin{lemma}\label{lem:allpaths}
Let $\mathbf{D}=\{\{d_{1,1},d_{1,2},\ldots d_{1,n} \},\{d_{2,1},d_{2,2},…d_{2,n} \},\ldots \{d_{k,1},d_{k,2},…d_{k,n}\}\}$ be a tree degree matrix without common leaves. Furthermore, assume each row is a path degree sequence. Then, $\mathbf{D}$ has edge disjoint path realizations.
\end{lemma}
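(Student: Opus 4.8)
The plan is to first translate the statement into a purely graph-theoretic one. A realization of a path degree sequence on $n$ vertices is a spanning tree with exactly two leaves, i.e.\ a Hamiltonian path of $K_n$ whose two endpoints are precisely the two columns where that row has degree $1$; every other (internal) vertex then automatically has degree $2$, exactly as a path degree sequence requires. The no-common-leaves hypothesis says that the $2k$ leaf-positions lie in $2k$ distinct columns, so the prescribed endpoint pairs $(a_i,b_i)$, $i=1,\dots,k$, are pairwise vertex-disjoint and in particular $2k\le n$. Thus the lemma is equivalent to the following: for any $k\le\lfloor n/2\rfloor$ vertex-disjoint pairs in $\{1,\dots,n\}$, $K_n$ contains $k$ edge-disjoint Hamiltonian paths such that the $i$-th path has endpoints $\{a_i,b_i\}$.

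To build these paths I would use the classical rotational (Walecki) construction. Label the vertices by $\mathbb{Z}_n$ and take the zig-zag base path $P_0=(0,1,n-1,2,n-2,3,\dots)$ together with its rotations $P_r=P_0+r$. A short computation with edge differences shows that $P_0$ uses each of the $\lfloor n/2\rfloor$ difference classes a fixed number of times and that suitable rotations are pairwise edge-disjoint; this is exactly the classical fact that $K_{2m}$ decomposes into $m$ Hamiltonian paths (and $K_{2m+1}$ into $m$ Hamiltonian cycles). The endpoints of $P_r$ are the ``antipodal'' pair $\{r,r+t\}$ for the single largest difference $t$, so the endpoint pairs produced by the construction form a controlled configuration of $t$-edges.

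It then remains to match the prescribed pairs $(a_i,b_i)$ to the endpoints of the construction by relabeling the vertices. When $n=2m$ is even this is clean: the $m$ paths of the decomposition have endpoints forming the antipodal perfect matching $\{r,r+m\}$, any $k$ vertex-disjoint pairs extend to a perfect matching, and a permutation of $\mathbb{Z}_n$ carries that matching to the antipodal one; selecting the $k$ paths with the desired endpoints (a subfamily of an edge-disjoint decomposition, hence still edge-disjoint) finishes the case. For odd $n$ I would either run the rotational argument directly---using that, since $2k\le n-1$, one can choose $k$ pairwise edge-disjoint rotations (a contiguous block) and relabel so that their antipodal endpoints are exactly the $(a_i,b_i)$---or reduce to the even case by deleting one internal-only vertex $z$, solving the problem on $K_{n-1}$, and then reinserting $z$ by subdividing one edge of each path.

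The main obstacle is the interaction between edge-disjointness and the endpoint constraints, which is where the parity split really bites. For even $n$ everything is driven by the single fact that the Walecki paths form a decomposition with antipodal endpoints, so any subfamily is automatically edge-disjoint. For odd $n$ one cannot select an arbitrary set of rotations, since not all pairs of rotations are edge-disjoint; I must argue that the edge-disjoint subfamilies are precisely those whose endpoints form a configuration flexible enough to absorb any vertex-disjoint pairs $(a_i,b_i)$ after relabeling. In the reinsertion alternative the analogous difficulty is to choose, one per path, $k$ pairwise vertex-disjoint edges to subdivide with $z$ so that the new edges at $z$ stay distinct; this needs a short combinatorial (greedy or matching) argument exploiting $2k\le n-1$, rather than the density estimates that would only work when $n$ is large.
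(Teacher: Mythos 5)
Your proposal is correct and is essentially the paper's own proof: the paper relabels the columns so that row $i$'s two leaves sit at positions $i$ and $\lceil (n+2)/2\rceil + i - 1$, then takes as the $i$-th realization the $i$-th rotation of the Walecki zig-zag path $(v_i, v_{i+1}, v_{n+i-1}, v_{i+2}, v_{n+i-2},\ldots)$ --- exactly your rotational construction with a contiguous block of $k$ rotations, whose pairwise edge-disjointness the paper likewise leaves to the cited Walecki reference. Of your two odd-$n$ alternatives, the contiguous-block one is the paper's route (and the safer one: the reinsertion variant would still require the nontrivial rainbow-matching step you flag, since a naive greedy choice of edges to subdivide needs roughly $n\ge 4k$ rather than $2k\le n-1$).
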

\begin{proof}
We are going to explicitly construct these realizations. This construction is known as the Waleczki construction \cite{alspach2008}.

First observe that $n \ge 2k$, otherwise $D$ cannot accommodate the $2k$ leaves with at most one leaf in each column.
Without loss of generality (since we can rearrange the rows and columns), we can say that $d_{1,1}=1$, $d_{1,\left\lceil\frac{n+2}{2}\right\rceil}=1$, $d_{2,2}=1$, $d_{2,\left\lceil\frac{n+2}{2}\right\rceil+1}=1$, $\ldots$ $d_{k,k} = 1$, $d_{k,\left\lceil\frac{n+2}{2}\right\rceil+k-1}=1$. Then the  $i^{\mathrm{th}}$ path contains the edges $(v_i, v_{i+1})$, $(v_{i+1},v_{n+i-1})$, $(v_{n+i-1},v_{i+2})$, $(v_{i+2},v_{n+i-2})$, etc., where $n+i-j$ is considered modulo $n$, taking a value from the set $\{1,2,\ldots, n\}$.
\end{proof}

Some of our proofs are based on induction using the existence of rainbow matchings. We define them below.
\begin{definition}
Let $G$ be an edge-colored simple graph. A \emph{rainbow matching of size $k$} of $G$ is a matching of size $k$ in $G$ such that no two edges have the same color. 
\end{definition}

\section{Sufficient and necessary condition for two edge-disjoint caterpillar realizations}

B\'erczi \emph{et al.} \cite{bckm} gave the following example of a tree degree matrix:
$$\mathbf{D}= \left(
\begin{array}{ccccccccccc}
5 &2 &2 &2 &2 &2 &1 &1 &1 &1 &1 \\
5 &2 &2 &2 &2 &2 &1 &1 &1 &1 &1
\end{array}
\right)
$$
has edge disjoint tree realizations, but does not have edge disjoint caterpillar realizations. For $\mathbf{D}$ to have a caterpillar realization, each vertex can have at most two adjacent non-leaf edges per caterpillar. Notice that the first vertex has degree 10. At most $2\cdot 2$ of these can be non-leaf edges. So, this vertex is adjacent to at least 6 vertices which are leaves. However, there are only $5$ vertices which are leaves in any of the trees. As you can see, it is a naturally necessary condition that the maximum summed degree cannot be larger than 4 more than the number of vertices which are leaves in any of the trees. In this section, we show that this together with the condition that the summed degree sequence is graphical, are necessary and sufficient conditions to produce edge disjoint caterpillar realizations. 

\begin{theorem}
   Let $D$ be a $2\times n$ degree sequence matrix. Then $D$ has a caterpillar realization if and only if the following conditions hold:
  \begin{enumerate}
      \item\label{theo2cond1} For both $i=1$ and $i=2$, 
      $$
      \sum_{j=1}^n d_{i,j} = 2n -2.
      $$
      \item\label{theo2cond2} The degree sequences
      $$
      d_{1,1}+d_{2,1}, d_{1,2}+d_{2,2}, \ldots, d_{1,n}+d_{2,n}
      $$
      are graphical.
      \item\label{theo2cond3} It holds that
      $$
      d_{\max} \le |S| -4
      $$
      where
      $$
      d_{\max} := \max_{j}\left\{d_{1,j}+d_{2,j}\right\}
      $$
      and
      $$
      S := \left\{j\mid \min\{d_{1,j}, d_{2,j}\} = 1\right\}.
      $$
  \end{enumerate}
\end{theorem}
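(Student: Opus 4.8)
The plan is to prove both directions. The necessity of the three conditions is the easier part, and the paper's preamble already sketches the key insight for condition~\ref{theo2cond3}, so I would dispatch that direction first and focus my energy on sufficiency.

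\textbf{Necessity.} Condition~\ref{theo2cond1} is immediate, since each color class is a caterpillar, hence a tree, hence has degree sum $2n-2$. Condition~\ref{theo2cond2} holds because the edge-disjoint union of the two caterpillars is a simple graph whose degree sequence is exactly the column sums, so that sequence is graphical by definition. For condition~\ref{theo2cond3}, I would follow the motivating example in the text: fix the vertex $j^\ast$ achieving $d_{\max}$. In each caterpillar a vertex is incident to at most two backbone (non-leaf) edges, so at most $2+2=4$ of the $d_{\max}$ edges at $j^\ast$ can go to non-leaf endpoints; the remaining $d_{\max}-4$ edges must terminate at vertices that are leaves in the corresponding color. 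Each such endpoint lies in $S$, and distinct endpoints give distinct elements of $S$ (the two caterpillars are edge-disjoint, and a vertex that is a leaf in color $1$ and the neighbor along a color-$2$ leaf-edge still contributes a single column index in $S$), so I would argue $d_{\max}-4 \le |S|$, i.e. $d_{\max} \le |S|+4$. Here I must be careful: the stated bound is $d_{\max}\le |S|-4$, so I would need to recheck the exact counting against the definition of $S$ as the set of columns where the \emph{minimum} of the two degrees is $1$; reconciling the constant and the precise leaf-bookkeeping is the one delicate spot in the necessity argument.

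\textbf{Sufficiency.} This is the heart of the theorem and where I expect the real work. The natural strategy is explicit construction: assume conditions~\ref{theo2cond1}--\ref{theo2cond3} and build the two caterpillars directly. I would reduce to placing the two backbones and then hanging the leaves. Sort the columns by summed degree $f_j=d_{1,j}+d_{2,j}$. The non-leaf vertices of caterpillar~$1$ are the columns with $d_{1,j}\ge 2$, and similarly for caterpillar~$2$; condition~\ref{theo2cond3} guarantees enough ``leaf room'' ($|S|$ large relative to $d_{\max}$) that no single vertex is forced to absorb more non-leaf incidences than its backbone can support. The plan is to lay out each backbone as a path on its non-leaf columns, distributing the internal degrees greedily in decreasing order so that the vertex of degree $d_{\max}$ receives its full budget of four backbone edges (two per color) and then attaches its remaining $d_{\max}-4$ edges to leaves drawn from $S$, with the edge-disjointness maintained by assigning to each target column at most one edge of each color.

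The main obstacle I anticipate is ensuring the two leaf-attachment schemes can be carried out \emph{simultaneously without edge collisions} while each color class remains a genuine caterpillar (non-leaf vertices forming a single path, not a disconnected or branching structure). Conditions~\ref{theo2cond1} and~\ref{theo2cond3} control the degree budgets and the global leaf supply, but they do not by themselves prevent two leaves from being forced onto the same color-and-endpoint pair. To handle this I would likely invoke a Hall-type or flow argument, or appeal to the rainbow-matching machinery flagged in the preliminaries, to show that a valid simultaneous assignment always exists under the counting guaranteed by the three conditions; Lemma~\ref{lem:tree+pathEG} and Lemma~\ref{lem:k-tree-EG} suggest that in the relevant regime only finitely many Erd\H{o}s--Gallai constraints are binding, which should let me reduce the existence of a collision-free placement to a small, checkable system rather than an unbounded family of constraints. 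A careful treatment of small-$n$ boundary cases (where $|S|$ is barely large enough) would round out the argument.
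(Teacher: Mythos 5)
Your necessity argument is essentially the paper's, and your hesitation about the constant is well founded: the bound you derive, $d_{\max}-4\le |S|$, i.e.\ $d_{\max}\le |S|+4$, is the condition the paper actually uses in its own proof and in the motivating B\'erczi \emph{et al.} example (there $d_{\max}=10$, $|S|=5$, and the obstruction is $10>5+4$); the ``$|S|-4$'' in the statement is a sign slip, not something you could have repaired by more careful bookkeeping. Your worry about a vertex serving as a leaf-neighbor of $j^\ast$ in both colors is resolved by simplicity of the union graph: all $d_{\max}$ edges at $j^\ast$ have distinct endpoints, so the at least $d_{\max}-4$ leaf-edges land on distinct columns of $S$. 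That half is fine.

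The sufficiency half, however, has a genuine gap: it is a strategy, not a proof. The entire content of the theorem is precisely the step you defer --- showing that the backbones can be laid out and the leaves attached \emph{simultaneously, collision-free, with each color class a single caterpillar} --- and you hand this to an unspecified ``Hall-type or flow argument'' without formulating the bipartite/flow instance, its constraints, or why conditions 1--3 imply feasibility. It is far from clear that a greedy layout sorted by summed degree works: the delicate configurations are columns with sum $2$ (vertices that are leaves in both trees), rows that are path degree sequences, and matrices where only one column has both entries exceeding $2$; in these regimes graphicality of the column sums is a global Erd\H{o}s--Gallai constraint that a local greedy assignment does not automatically respect. The paper instead proves sufficiency by induction on $n$: delete a column with sum $2$, subtract $1$ from an entry $d_{1,j_1}>2$ and an entry $d_{2,j_2}>2$, and --- this is the real work --- verify that the reduced matrix still satisfies all three conditions, in particular that its column sums remain graphical, by checking the first few E-G inequalities (via Lemma~\ref{lem:k-tree-EG} and Lemma~\ref{lem:tree+pathEG}) together with finite case analyses for the degenerate situations (both rows paths, handled by Kundu; one row a path; a unique column with both entries $\ge 3$, handled by four explicit sub-cases). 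Your proposal never engages with preserving graphicality under the reduction, nor with the small-$n$ exceptional matrices, so as written it does not establish the theorem.
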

\begin{proof}
Conditions~\ref{theo2cond1}.~and~\ref{theo2cond2}.~are clearly necessary. Condition~\ref{theo2cond3}.~is also necessary, since any non-leaf vertex, will have at most two non-leaf neighbors in a caterpillar realization. If two caterpillar realizations are edge-disjoint, at least $d_{1,i}+d_{2,i}-4$ leaves must be a neighbor of $v_i$ in one of the caterpillar realizations.

Now we show that the conditions are also sufficient. Let $D$ be a $2\times n$ degree matrix that satisfies the conditions in the given theorem. Then the minimum column sum in $D$ is either $3$ or $2$. If the minimum sum is $3$, then there is a caterpillar realization, according to Theorem~\ref{theo:case_k2}. Observe that the non-trivial corollary that the necessary conditions holds if the minimum column sum is $3$. If the minimum column sum is $2$, then either there exist $j_1 \ne j_2$, such that $d_{1,j_1}>2$ and $d_{2,j_2} > 2$ or there does not exist two such distinct numbers $j_1$ and $j_2$.

Suppose such $j_1$ and $j_2$ exist. Order the columns in decreasing order by their column sums, and w.l.o.g. let $d_{1,1}>2$ (we can reorder the degree sequences if not). If $\exists j_1 \ne j_2 \in \{1,2\}$ such that $d_{1,j_1}>2$ and $d_{2,j_2}>2$, then fix such $j_1$ and $j_2$. Otherwise, let $j_1$ be $1$ and let $j_2$ be the smallest index for which $d_{2,j_2}>2$.

Let $D'$ denote the degree matrix we get from $D$ by removing a column with sum $2$ and subtracting $1$ both from $d_{1,j_1}$ and $d_{2,j_2}$. We are going to prove that $D'$ also satisfies the conditions given in the theorem.

Clearly, $D'$ is a tree degree matrix. Also, we remove a vertex that has a leaf, but also removed $1$ from the largest degree. If the first vertex has the unique largest summed degree in $D$, then it will still be largest in $G'$ (though may not be unique). Thus, condition~\ref{theo2cond3} from the theorem also holds for $D'$. If the first vertex does not have a unique largest summed degree, then the inequality in condition~\ref{theo2cond3} cannot be sharp for $D$, and thus will also hold for $D'$. Indeed, either $d_{1,1}+d_{1,2} \ge f_1$ or $d_{2,1}+d_{2,2} \ge f_1$, due to pigeonhole principle (it is possible that both degree sums are exactly $f_1$). Any tree with two vertex degrees $d_1$ and $d_2$ has at least $d_1+d_2-2$ leaves, thus we get that $|S| \ge f_1 -2$.

Therefore, we only have to prove that the column sums of $D'$, $f'_j := d'_{1,j}+d'_{2,j}$, form a graphical degree sequence. To prove it, it is sufficient to show that the first four E-G inequalities hold, according to Lemma~\ref{lem:k-tree-EG}. If $f_1 := d_{1,1}+d_{2,1}$ is the unique largest degree, then the first E-G inequality will also hold for $f'$. Indeed, both sides of the E-G inequality are decreased by $1$ (compared to the first E-G inequality for $f$). If $f_1 = f_2$ and $j_2 \notin \{1,2\}$ then 
$$
f_1 + f_2 \le 2n-2 -(n-2) +4 = n+4
$$
therefore, $f_1$ is at most $\frac{n}{2}+3$. We need that
$$
\left\lfloor\frac{n}{2}\right\rfloor+2\le n-2
$$
which holds if $n\ge 7$. If $n=6$, then the only possible tree degree matrix in which neither of the rows are path degree sequence and both $f_1$ and $f_2$ are $5$ is
$$
\left(
\begin{array}{cccccc}
3 & 3 & 1 & 1 & 1 & 1\\
2 & 2& 3 & 1 & 1 & 1
\end{array}
\right),
$$
however, in this case the column sums are not graphical. Similarly, there are not any $2\times 5$ tree degree matrices with the given condition whose column sum would be graphical. Finally, if $f_1 = f_2 = f_3$, then 
$$
f_1 + f_2 + f_3 \le 4n-4 -2(n-3) = 2n+2.
$$
That is, $f_1$ is at most $\frac{2n+2}{3}$. We need that
$$
\left\lfloor\frac{2n+2}{3}\right\rfloor\le n-2
$$
which holds if $n\ge 6$. If $n=5$, then the only possible tree degree matrix in which $f_1 = f_2 = f_3$ and both rows are not path degree sequences is
$$
\left(
\begin{array}{ccccc}
3 & 2 & 1 & 1  & 1\\
1 & 2& 3 & 1 & 1
\end{array}
\right),
$$
however, in this case the column sums are not graphical. Therefore, whenever the column sums of $D$ are graphical, the column sums of $D'$ satisfy the first E-G inequalities.

If $f_2 > f_3$, then $f'$ satisfies all E-G inequalities, since from $s =2$, both sides are decreased by $2$ (compared to the E-G inequalities for $f$).

Let $G'$ be a caterpillar realization of $D'$, by induction on the number of vertices, we can assume that such a realization exists. Then we can get a caterpillar realization of $D$ from $G'$ by adding a new vertex $v$ to $G'$ and connecting $v$ to $v_{j_1}$ with an edge of the first color and to $v_{j_2}$ with an edge of the second color.

If there does not exist distinct $j_i$ and $j_2$, where $d_{1, j_1} > 2$ and $d_{2, j_2} > 2$, then there are three cases:
\begin{enumerate}
\item Both degree sequences are paths.
\item Only one of the degree sequences is a path.
\item There is only one vertex, $v_1$, such that $d_{1,1} > 2$ and $d_{2,1}>2$.
\end{enumerate}

If both degree sequences are paths, then any tree realization is also a caterpillar realization. Kundu's theorem says there is a tree realization if the sum of the degree sequences is graphical \cite{kundu2trees}.

If one of the degree sequences is a path, then without loss of generality, the second degree sequence is a path. When $n \le 6$, there are $5$ possible tree degree matrices satisfying that the first row is not a path degree sequence, the second row is a path degree sequence, there is at least one column with sum $2$ and the column sums form a graphical degree sequence:
\begin{enumerate}
\item $\left(
\begin{array}{ccccc}
\underline{3} & 2 & 1 & 1 & 1 \\
1 & \underline{2} & 2 & 2 & 1
\end{array}\right)$

\item $\left(
\begin{array}{cccccc}
\underline{3} & 2 & 2 & 1 & 1 & 1 \\
1 & \underline{2} & 2& 2 & 2 & 1
\end{array}\right)$

\item $\left(
\begin{array}{cccccc}
\underline{3} & 2 & 2 & 1 & 1 & 1 \\
2 & \underline{2} & 1& 2 & 2 & 1
\end{array}\right)$

\item $\left(
\begin{array}{cccccc}
\underline{3} & 2 & 2 & 1 & 1 & 1 \\
2 & \underline{2} & 2& 2 & 1 & 1
\end{array}\right)$

\item $\left(
\begin{array}{cccccc}
\underline{4} & 2 & 1 & 1 & 1 & 1 \\
1 & \underline{2} & 2& 2 & 2 & 1
\end{array}\right)$

\end{enumerate}

In each case, we obtain a tree degree matrix $D'$ by subtracting $1$ from the underlined entries and removing a column with sum $2$. These $D'$ matrices have caterpillar realizations since either they are path degree sequences with graphical column sum or the minimum degree is $3$ (or both). In each case, the caterpillar realization $G'$ can be extended to a caterpillar realization of $D$ by adding one more vertex $v$ and connecting $v$ to the vertices where $1$ was subtracted from the degree using the appropriate color.

Now we consider the case when $n\ge 7$. We prove the theorem by induction on the number of vertices. Assume that the columns of $D$ are in decreasing order by their column sum, and amongst the same column sums, order the vertices lexicographically based on the two entries in the column. Since the second row is a path degree sequence, and there is a column with sum $2$, at least one of the entries $d_{2,1}$ and $d_{2,2}$ are $2$. If $d_{2,2} =1$, then let $D'$ be the degree matrix we obtain by removing $1$ from $d_{2,1}$ and $d_{1,2}$ and removing a column with sum $2$. Otherwise, let $D'$ be the degree matrix we obtain by removing $1$ from $d_{1,1}$ and $d_{2,2}$ and removing a column with sum $2$. We show that $\left\{f'_j\right\} :=d'_{1,j}+d'_{2,j}$ is graphical. Observe that the second degree sequence in $D'$ is a path, and the number of columns in $D'$ is at least $6$.  Therefore, it is sufficient to show that the first E-G inequality holds, due to Lemma~\ref{lem:tree+pathEG}.

If at least the first three columns have the same column sum in $D$, then the largest degree is at most $\frac{4n-4-3(n-3)+2}{3}= \frac{n+7}{3}$. We need that 
$$
\frac{n+7}{3} \le n-2,
$$
that is,
$$
6.5 \le n,
$$
which holds.
Then the first E-G inequality will also hold for $f'$. If $f_{1}$ is the unique largest degree, then $f'_{1}$ is one of the largest degree in $f'$. Since $f'_{1} = f_{1}-1$, and $1$ is subtracted from the right-hand side of the first E-G inequality. That the first E-G inequality holds for $f$ implies that it also holds for $f'$. Therefore, the column sums of $D'$ form a graphical degree sequence. By induction hypothesis, $D'$ has a caterpillar realization, $G'$. Then $D$ also has a caterpillar realization by extending $G'$ with a vertex and connecting it to the vertices where $1$ was subtracted from the degree using the appropriate color.

Finally, if there is only one vertex such that $d_{1,j} > 2$ and $d_{2,j} >2$, then this is the vertex with the largest summed degree. We prove the following two observations:
\begin{enumerate}
\item The number of columns with degree sum $2$ is at most $4$. Indeed, observe that the first tree has $d_{1,1}$ leaves while the second tree has $d_{2,1}$ leaves. Since the number of vertices which are leaves in at least one of the trees must be at least $d_{1,1}+d_{2,1}-4$, at most $4$ vertices might be leaves in both trees.
\item The number of columns with degree sum $4$ is at least the number of columns with degree sum $2$. This is the direct consequence that the summed degree sequence is graphical, therefore the E-G inequality holds with $s=1$. That is, the number of vertices above the first vertex is at least $d_{1,1}+d_{2,1}$. Also observe that the number of vertices with degree sum smaller than $4$ is $d_{1,1}+d_{2,1}$ minus the number of vertices with degree sum $2$. 
\end{enumerate}
Therefore, we have the following $4$ possible sub-cases:
\begin{enumerate}
\item
$\left(
\begin{array}{ccccccccccc}
\underline{d_{1,1}} & 2                   & \ldots & 2&  2 & \ldots &2& 1& \ldots & 1&1 \\
d_{2,1}                   & \underline{2} & \ldots & 2& 1&  \ldots &1& 2& \ldots & 2& 1
\end{array}\right)$

\item
$\left(
\begin{array}{ccccccccccccc}
\underline{d_{1,1}} & 2                   & \underline{2}&\ldots & 2&  2 & \ldots &2& 1& \ldots & 1&1 &1 \\
\underline{d_{2,1} }& \underline{2} & 2                  &\ldots & 2& 1&  \ldots &1& 2& \ldots & 2& 1&1
\end{array}\right)$

\item
$\left(
\begin{array}{ccccccccccccccc}
\underline{\underline{d_{1,1}}} & 2                     &2& \underline{2}&\ldots & 2&  2 & \ldots &2& 1& \ldots & 1&1 &1 &1\\
\underline{d_{2,1} }& \underline{2} & \underline{2} &2                 &\ldots & 2& 1&  \ldots &1& 2& \ldots & 2& 1&1&1
\end{array}\right)$

\item
$\left(
\begin{array}{ccccccccccccccccc}
\underline{\underline{d_{1,1}}} & 2                     &2& \underline{2}&\underline{2}&\ldots & 2&  2 & \ldots &2& 1& \ldots & 1&1 &1 &1&1\\
\underline{\underline{d_{2,1}}}& \underline{2} & \underline{2} &2      &2 &\ldots & 2& 1&  \ldots &1& 2& \ldots & 2& 1&1&1&1
\end{array}\right)$

\end{enumerate}
In each case, let $D'$ be a tree degree matrix we obtain by removing all columns with degree sum $2$, removing $1$ from each underlined degree and removing $2$ from each double underlined degree. The so-obtained matrices will be tree degree matrices without common leaves. Therefore, $D'$ has a caterpillar realization $G'$. W.l.o.g., we might assume that the vertices that have degree $1$ in one of the degree sequences after removing $1$ or $2$ are leaves adjacent to an end vertex of the backbone of the caterpillar. 
We can construct a caterpillar realization of $D$ by adding appropriate number of vertices to $G'$ and connect these to vertices where degree $1$ or $2$ were subtracted using the appropriate color. It is easy to see that in each case, we can add these edges without introducing parallel edges. Since we added leaves to backbone vertices or to leaves that were adjacent to end vertices of the backbone, the so-obtained edge disjoint tree realization will also be a caterpillar realization.

\end{proof}

\section{Proving Conjecture~\ref{con:caterpillar}.~for $k \le 4$}
In this section we are going to prove Conjecture~\ref{con:caterpillar}.~for all $k\le 4$.
The proofs are based on induction. The base cases are the cases when each degree sequence is a path degree sequence. Those degree matrices have edge disjoint path realizations, according to Lemma~\ref{lem:allpaths}. In the inductive step, we will find rainbow matchings in sufficiently long paths. The following two lemmas state that such paths exist.
\begin{lemma}
    Let $\mathbf{D} \in \mathbb{N}^{k\times n}$ be a tree degree matrix without common leaves. Then in any caterpillar realization of $\mathbf{D}$, each caterpillar has a path of length at least $2k-1$.
    
\label{Lemma.lemma2.1}
\end{lemma}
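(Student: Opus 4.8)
The plan is to bound the number of leaves of a single caterpillar from above using the no-common-leaves hypothesis, and then read off a lower bound on the length of its backbone, hence on the length of its longest path. Fix a color $i$ and let $L_{i'} := \{\, j : d_{i',j} = 1 \,\}$ denote the leaf set of the $i'$-th caterpillar for each $i'$. The key point is that ``without common leaves'' means precisely that the sets $L_1, \dots, L_k$ are pairwise disjoint subsets of $\{1,\dots,n\}$. Since every tree on $n \ge 2$ vertices has at least two leaves we have $|L_{i'}| \ge 2$ for all $i'$, so disjointness yields $\sum_{i'=1}^{k} |L_{i'}| \le n$ and therefore
$$
|L_i| \;\le\; n - \sum_{i' \ne i} |L_{i'}| \;\le\; n - 2(k-1).
$$

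First I would translate this into a statement about the backbone. By definition of a caterpillar realization the non-leaf vertices of color $i$, namely the columns $j$ with $d_{i,j} \ge 2$, form a path; this backbone therefore contains $m := n - |L_i| \ge 2k-2$ vertices. Next I would extend the backbone at both ends to build a long path. Each endpoint of the backbone is a non-leaf vertex with only one backbone neighbour, so in the caterpillar it must be adjacent to at least one leaf; prepending one such leaf and appending another produces a path with $(m-1) + 2 = m + 1 \ge 2k-1$ edges. If the backbone degenerates to a single vertex, that vertex is a star centre incident to at least two leaves and the same count $m+1$ still applies. Since $i$ was arbitrary, every caterpillar contains a path of length at least $2k-1$.

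The only step requiring care, and the main potential obstacle, is the last one: one must confirm that both backbone endpoints genuinely carry leaves, so that the two extra edges are available to reach $2k-1$, and that the short-backbone and degenerate cases are handled uniformly. Everything else is elementary counting; the entire content of the lemma is the observation that disjointness of the leaf sets forces each color to have at most $n - 2(k-1)$ leaves, and hence a backbone of at least $2k-2$ vertices.
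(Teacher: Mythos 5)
Your proposal is correct and is essentially the paper's argument in contrapositive form: the paper assumes some caterpillar has no path of length $2k-1$, concludes it has at least $n-2k+3$ leaves, and adds the $2(k-1)$ leaves of the other trees to exceed $n$; you instead bound $|L_i| \le n - 2(k-1)$ directly and convert that into a backbone of at least $2k-2$ vertices, but the counting (disjoint leaf sets, two leaves per tree, backbone length versus longest path) is identical. Your explicit verification that each backbone endpoint carries a leaf edge is a detail the paper uses implicitly, so nothing substantive differs.
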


\begin{proof}
We will show this by contradiction. Assume there exists a degree sequence that does have a path of length $2k-1$. Then, it has at most $2k-3$ internal nodes and at least $n-2k+3$ leaves. Each of the other tree degree sequences have at least two leaves. So altogether, there are at least $n-2k+3+2(k-1)=n+1$ leaves. However, there are only $n$ vertices. So, there must exist one vertex with two leaves, producing a contradiction. 
\end{proof}
\begin{lemma}
    Let $\mathbf{D} = \{D_1, D_2, \ldots, D_k \} \in \mathbb{N}^{k\times n}$ be a tree degree matrix without common leaves.  If $n\geq 2k+2$ and $k\ge 4$, then within $(k-1)$ arbitrary caterpillars of any caterpillar realization of $\mathbf{D}$, there exists a path of length at least $2k+1$.
\label{Lemma.lemma2.2}
\end{lemma}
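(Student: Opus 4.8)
The plan is to argue by contradiction via a leaf-counting argument, in the same spirit as the proof of Lemma~\ref{Lemma.lemma2.1}, but now summing leaf counts over all $k$ caterpillars while only assuming short backbones for the $k-1$ chosen ones.

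First I would record the dictionary between path length and backbone size in a caterpillar. If a caterpillar has $b$ non-leaf vertices, so that its backbone is a path on $b$ vertices, then its longest path runs from a leaf hanging off one end of the backbone, through all $b$ backbone vertices, to a leaf hanging off the other end; hence the longest path has exactly $b+1$ edges. Indeed, each backbone endpoint is a non-leaf vertex with exactly one backbone neighbor, so it has at least one leaf neighbor, and every vertex off the backbone is a leaf; the degenerate cases $b\le 1$ are checked directly and give the same formula. Consequently, ``no path of length $2k+1$'' is equivalent to ``at most $2k-1$ non-leaf vertices,'' i.e.\ ``at least $n-(2k-1)=n-2k+1$ leaves.''

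Now suppose, for contradiction, that none of the $k-1$ selected caterpillars contains a path of length $2k+1$. By the previous paragraph each of these $k-1$ caterpillars has at least $n-2k+1$ leaves. The remaining, unselected caterpillar is a tree and therefore has at least $2$ leaves. Because $\mathbf{D}$ has no common leaves, the leaf sets of the $k$ caterpillars are pairwise disjoint, so the total number of leaves summed over all $k$ trees is at most $n$. Combining these facts yields
$$
(k-1)(n-2k+1) + 2 \;\le\; n .
$$

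Finally I would show this inequality is violated under the hypotheses $n\ge 2k+2$ and $k\ge 4$. Rearranging, the displayed inequality is equivalent to $(k-2)n \le 2k^2-3k-1$, that is, $n \le 2k+1+\tfrac{1}{k-2}$, where dividing by $k-2$ is valid since $k\ge 4$ gives $k-2>0$. As $k\ge 4$ forces $\tfrac{1}{k-2}\le\tfrac12<1$, this would imply $n< 2k+2$, contradicting $n\ge 2k+2$; hence some selected caterpillar must contain a path of length at least $2k+1$. I expect the only delicate points to be (i) pinning down the $b+1$ formula for the longest path, including the degenerate small-$b$ cases, and (ii) remembering to count the unselected caterpillar's two leaves, since without that extra $+2$ the estimate is just too weak to close the boundary case $n=2k+2$; the role of the hypothesis $k\ge 4$ is precisely to keep $\tfrac{1}{k-2}$ small enough that $n\ge 2k+2$ already suffices.
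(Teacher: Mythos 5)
Your proof is correct and follows essentially the same route as the paper's: negate the conclusion, convert ``no path of length $2k+1$'' into ``at least $n-2k+1$ leaves'' for each of the $k-1$ selected caterpillars, add the $2$ leaves of the remaining tree, use disjointness of leaf sets to bound the total by $n$, and derive a contradiction from $n\ge 2k+2$ and $k\ge 4$. Your algebraic finish (writing the bound as $n\le 2k+1+\tfrac{1}{k-2}<2k+2$) and your explicit justification of the backbone-length dictionary are minor presentational refinements of the same argument.
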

\begin{proof}
    Assume, to the contrary, that there does not exist a path of $2k+1$ edges within an arbitrary $(k-1)$ $D_i$'s. Then, there exists a set of $k-1$ tree degree sequences in $\mathbf{D}$ such that every $D_i$ does not have a path of length $2k+1$ edges. In other words, each of them must have at most $(2k+1)-2 = 2k-1$ internal nodes, and thus, must have at least $n-2k+1$ leaves. Hence, there are at least 
$$(k-1)(n-2k +1)+2=kn-n-2k^2+3k+1$$
leaves, which can be at most $n$. From this, we get that
$$
n\le \frac{2k^2-3k-1}{k-2}.
$$
However, since $n \geq 2k + 2$, we must have that
$$
\frac{2k^2-3k-1}{k-2} < 2k+2,
$$
implying that $k \leq 3$, a contradiction.

%The number of internal nodes for each $D_i$ is $n-l_i$, where $l_i$ is the number of leaves on the realization of $D_i$.\\
%    
%    So, $n-l_i < 2k$ for $(k-1)$ $D_i$'s. Thus, $l_i\geq 2k-1$. This means that for the $(k-1)$ $D_i$'s, we have at least $(k-1)(n-(2k-1))$ leaves. Also, our tree degree sequence that is not included contains at least two leaves. So, our $(k-1)$ $D_i$'s can have a most $(n-2)$ leaves.\\
%    
%    We compare our minimum amount of leaves, $(k-1)(n-(2k-1))$, with the maximum amount of leaves, $(n-2)$ leaves. The two value will be equal when $n=2k-1+\frac{2k+1}{k-2} \geq 2k+2$. Thus, starting from $n=2k+2$, the $(k-1)(n-(2k-1))$ will always be greater than $(n-2)$ as $k-1\geq3$. As a result, $(k-1)(n-(2k-1)) \geq n-2$ when $k\geq 4, n\geq 2k+2$, which raises the contradiction. 
\end{proof}

The following lemma is on the existence of a certain vertex in a tree degree matrix without common leaves.
\begin{lemma}\label{lem:v-vertex-existance}
Let $\mathbf{D} \in \mathbb{N}^{k\times n}$ be a tree degree matrix without common leaves. Assume that not all rows are path degree sequences. Then there exists a column with the following properties:
\begin{enumerate}
\item The sum of the column is $2k-1$.
\item The column contains a $1$ in a row which is not a path degree sequence.
\end{enumerate}
\end{lemma}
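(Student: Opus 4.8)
The plan is to convert the statement into a pure counting argument on column sums. Write $c_j := \sum_{i=1}^k d_{i,j}$ for the $j$-th column sum. Because every degree in a tree degree sequence is positive and, by the no-common-leaves hypothesis, at most one entry of each column equals $1$, every column satisfies $c_j \ge 1 + 2(k-1) = 2k-1$, with equality exactly when one entry is $1$ and the remaining $k-1$ entries all equal $2$. This equality analysis is the crux: a column attaining the minimum sum $2k-1$ automatically contains a unique $1$ (so property~1 and the existence of a leaf in property~2 come for free), and the only thing left to arrange is that this $1$ sits in a non-path row.

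Next I would count how many columns attain this minimum. Since each row sums to $2n-2$, the grand total of the matrix entries is $k(2n-2) = 2kn - 2k$. Letting $m$ be the number of columns with $c_j = 2k-1$ and bounding the remaining $n-m$ columns below by $2k$, we obtain
$$
2kn - 2k \;=\; \sum_{j=1}^n c_j \;\ge\; (2k-1)m + 2k(n-m) \;=\; 2kn - m,
$$
so that $m \ge 2k$. Hence at least $2k$ columns have sum exactly $2k-1$, each carrying precisely one $1$ and $k-1$ twos.

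Then I would account for all the leaves that lie in path rows. Let $p$ be the number of path rows and $q = k - p$ the number of non-path rows; the hypothesis that not all rows are paths gives $q \ge 1$. By definition a path degree sequence has exactly two leaves, and by the no-common-leaves hypothesis the columns carrying the leaves of distinct path rows are disjoint, so there are exactly $2p$ columns whose unique $1$ lies in some path row. Every minimum-sum column whose $1$ happens to lie in a path row must therefore be one of these $2p$ columns. Consequently at least $m - 2p \ge 2k - 2p = 2q \ge 2$ of the minimum-sum columns have their $1$ in a non-path row, and any such column is exactly the one the lemma asks for.

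I do not expect a serious obstacle: the whole argument is a short averaging-plus-pigeonhole computation. The two points that must be handled with care are the equality case of $c_j \ge 2k-1$ (which is what forces a single $1$ together with all other entries equal to $2$, simultaneously delivering both properties) and the bookkeeping that the $2p$ path-leaf columns are genuinely distinct, so that the surplus $m - 2p$ is positive precisely because $q \ge 1$. If one prefers, the last paragraph can be phrased as a contradiction—assuming every minimum-sum column has its $1$ in a path row forces $2k \le m \le 2p$, hence $p \ge k$, contradicting $q \ge 1$—but the direct count is cleaner and even exhibits at least $2q$ suitable columns.
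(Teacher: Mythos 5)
Your proof is correct, but there is nothing in the paper to compare it against: the paper does not prove this lemma at all, stating only that ``The proof is given in \cite{ghm}.'' Your argument therefore supplies a self-contained proof where the paper defers to an external reference. The three steps all check out: (i) positivity of tree degrees plus the no-common-leaves condition forces $c_j \ge 2k-1$ for every column, with equality exactly when the column has a single $1$ and $k-1$ entries equal to $2$; (ii) comparing the grand total $k(2n-2)$ with the lower bound $(2k-1)m + 2k(n-m) = 2kn-m$ gives $m \ge 2k$ for the number $m$ of minimum-sum columns; (iii) the $p$ path rows account for exactly $2p$ columns of the whole matrix that contain a $1$ lying in a path row (two leaves per path row, all in pairwise distinct columns by the no-common-leaves condition), so at least $m - 2p \ge 2(k-p) = 2q \ge 2$ minimum-sum columns carry their unique $1$ in a non-path row, and any one of them is the column the lemma demands. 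A pleasant byproduct of your counting is the quantitative strengthening that at least $2q$ such columns exist, where $q$ is the number of non-path rows; the weaker existence statement is all the paper ever uses, but the stronger form comes at no extra cost.
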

The proof is given in \cite{ghm}.

Now, we are ready to prove the conjecture for $k\le 4$.
With only one tree degree sequence, it is clear that we have a disjoint caterpillar realization. 
For $k = 2$, the conjecture was proved in \cite{bckm}, however, here is a simplified proof, the proof when $k=2$.
\begin{theorem}\label{theo:case_k2}
Let $\mathbf{D}$ be a $2\times n$ tree degree matrix without common leaves. Then $\mathbf{D}$ has a caterpillar realization.
\end{theorem}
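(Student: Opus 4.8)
The plan is to prove the statement by induction on $n$, taking as the base case the situation in which both rows are path degree sequences; there Lemma~\ref{lem:allpaths} already supplies edge-disjoint path realizations, and every path is a caterpillar. Note that a $2\times n$ tree degree matrix without common leaves forces $n\ge 4$, since each tree contributes at least two leaves and no vertex is a leaf in both trees, so this base case is the genuine bottom of the recursion rather than a degenerate one.

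For the inductive step I would assume that at least one row is not a path degree sequence and invoke Lemma~\ref{lem:v-vertex-existance} with $k=2$: it produces a column $v$ whose two entries sum to $2k-1=3$ and which contains a $1$ in a non-path row. After possibly swapping the rows, write $d_{1,v}=1$ and $d_{2,v}=2$, with the first row not a path. Because the first row is not a path it has an entry $d_{1,u}\ge 3$, and necessarily $u\ne v$. I then form the reduced matrix $D'$ by deleting column $v$ and subtracting $1$ from $d_{1,u}$. A direct check shows that both rows of $D'$ sum to $2(n-1)-2$ and remain positive, so $D'$ is again a tree degree matrix; moreover the only altered entry is $d'_{1,u}=d_{1,u}-1\ge 2\ne 1$, so $D'$ still has no common leaves. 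By the induction hypothesis $D'$ has a caterpillar realization $G'$ on the vertex set $V\setminus\{v\}$, and I need not check graphicality separately because the realization is produced directly.

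It remains to reinsert $v$. In $G'$ the vertex $u$ satisfies $d'_{1,u}\ge 2$, so it is a backbone vertex of the first caterpillar; adding the color-$1$ edge $(v,u)$ therefore keeps that caterpillar a caterpillar and restores the degree of $u$. To give $v$ its color-$2$ degree $2$ I would subdivide a single edge of the second caterpillar, replacing an edge $xy$ by the path $x-v-y$. Subdividing either a backbone edge or a leaf edge incident to a backbone endpoint preserves the caterpillar structure, and the only obstruction to simplicity is that the two neighbors $x,y$ of $v$ must both differ from $u$, since otherwise the edge $(v,u)$ would be doubled. Here Lemma~\ref{Lemma.lemma2.1} is the key tool: applied to $G'$ (a no-common-leaf realization on $n-1$ vertices with $k=2$) it guarantees a path of length at least $3$ in the second caterpillar, so that caterpillar is not a star and its backbone has two distinct endpoints, each carrying at least one leaf. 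A short case analysis on the position of $u$ relative to these two endpoints shows that some leaf edge at a backbone endpoint has both endpoints distinct from $u$, and subdividing it finishes a caterpillar realization of $D$.

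The main obstacle I expect is exactly this reinsertion step: one must simultaneously keep both color classes caterpillars and keep the underlying graph simple, while the subdivision realizing the degree-$2$ entry dodges the vertex $u$ used by the color-$1$ edge. The leverage comes from Lemma~\ref{Lemma.lemma2.1}, which rules out the troublesome star shape and furnishes two endpoint leaves to choose between; the handful of smallest matrices (around $n=4$ and $n=5$) can be dispatched directly in case the generic argument needs a guaranteed nonempty supply of such leaves.
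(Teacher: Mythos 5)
Your proposal is correct and follows essentially the same route as the paper's proof: induction with the all-paths base case (Lemma~\ref{lem:allpaths}), Lemma~\ref{lem:v-vertex-existance} to locate the $(1,2)$ column, deletion of that column plus decrementing an entry $d_{1,u}\ge 3$, and reinsertion of $v$ as a leaf of color $1$ at $u$ and as a subdivision vertex of color $2$. The only cosmetic difference is that the paper picks any edge of the length-$\ge 3$ path (backbone plus two end leaf edges) not incident to $u$, arguing that at most two of its edges touch $u$, whereas you restrict to the two endpoint leaf edges and use their vertex-disjointness; both arguments are valid and interchangeable.
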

\begin{proof}
The proof is constructive, using an induction on the number of vertices. If both sequences are path sequences, then they have edge disjoint realizations, according to Lemma~\ref{lem:allpaths}. Assume that at least one of the degree sequences is not a path; w.l.o.g., we can assume that the first degree sequence is not a path. According to Lemma~\ref{lem:v-vertex-existance}, there is a vertex $v$ which is a leaf in the first degree sequence, and has degree $2$ in the second degree sequence (the two rows in $\mathbf{D}$ might have to be swapped). Let $v_j$ be a vertex with degree at least $3$ in the non-path degree sequence. Then removing the column representing vertex $v$ and subtracting $1$ from $d_{1,j}$ yields a tree degree matrix $\mathbf{D}'$ without common leaves. By our induction hypothesis, it has a caterpillar realization. 

Let $G'$ be a realization of $\mathbf{D}'$. Its caterpillar realization of the second row of $\mathbf{D}'$ contains at least one edge in its backbone. At either end, there is $1$ edge connecting a endpoint to the backbone. Altogether, they form a path of at least $3$ edges. At most two of them can be incident to $v_j$. Consider an edge not incident to $v_j$; let it be $(u,w)$. We can construct a caterpillar realization of $\mathbf{D}$ from $G'$ in the following way. Add vertex $v$ to $G'$. Connect $v$ and $v_j$ with an edge of the first color. Remove edge $(u,w)$ and connect $v$ to both $u$ and $w$ with an edge of the second color. The subgraph of each color is a caterpillar realization of the appropriate row of $\mathbf{D}$, and they are edge-disjoint. Indeed, the caterpillar of the first color in $G'$ is extended with a leaf, and $v_j$ is not a leaf in this caterpillar. Vertex $v$ is a degree $2$ vertex in the second caterpillar, either inserted into the backbone or inserted between a leaf and the adjacent last vertex of the backbone. In both cases, the resulting tree is a caterpillar.
\end{proof}

The proof is very similar for three and four caterpillars. Just instead of a single edge $(u,w)$ avoiding vertex $v_j$, we have to find a rainbow matching avoiding $v_j$ in appropriate paths. Since we will use this technique multiple times throughout the paper, we introduce it in a separate lemma.

\begin{lemma}\label{lem:caterpillar-induction}
Let $\mathbf{D} \in \mathbb{N}^{k\times n}$ be a tree degree matrix without common leaves. Let $\mathbf{D}' \in \mathbb{N}^{k\times n-1}$ 
be a tree degree matrix without common leaves that we obtain from $\mathbf{D}$ by deleting a column containing all $2$'s except a $1$ in row $i$, and subtracting $1$ from an entry $d_{i,j} >2$. Let $G'$ be an arbitrary caterpillar realization of $\mathbf{D}'$. For the realized caterpillar of row $l$, let $P^l$ be the path containing the backbone of the caterpillar and two additional edges connecting arbitrary leaves to the end vertices of the backbone. If $\bigcup_{l\ne i}P^l$ contains a rainbow matching of size $k-1$ avoiding $v_j$, then $\mathbf{D}$ has a caterpillar realization. 
\end{lemma}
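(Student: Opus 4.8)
The plan is to reverse the reduction that produced $\mathbf{D}'$: I will add back the deleted vertex $v$ to the realization $G'$ and route its incident edges using the guaranteed rainbow matching. Recall that the deleted column assigns $v$ degree $1$ in row $i$ and degree $2$ in every row $l \ne i$, and that $1$ was subtracted from $d_{i,j}$. Accordingly, let $M = \{e_l : l \ne i\}$ be the rainbow matching avoiding $v_j$, where $e_l = (u_l, w_l)$ is its edge of color $l$; since any edge of color $l$ appearing in $\bigcup_{m \ne i} P^m$ must lie in $P^l$, each $e_l$ is a genuine edge of the color-$l$ caterpillar. I construct $G$ from $G'$ by adding the vertex $v$, joining $v$ to $v_j$ with an edge of color $i$, and, for each $l \ne i$, deleting $e_l$ and joining $v$ to both $u_l$ and $w_l$ with edges of color $l$.

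First I would check that the degrees are exactly those prescribed by $\mathbf{D}$. In color $i$ only $v$ and $v_j$ change: $v$ receives degree $1$, and $v_j$ is raised from $d_{i,j}-1$ back to $d_{i,j}$, while every other color-$i$ degree is untouched. In each color $l \ne i$, deleting $e_l$ and inserting the two edges at the new vertex $v$ leaves the degrees of $u_l$ and $w_l$ unchanged and gives $v$ degree $2$, while all remaining vertices keep their degrees. Hence the degree of every column agrees with $\mathbf{D}$.

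The crucial step is verifying that the color classes stay pairwise edge-disjoint (equivalently, that $G$ is a simple graph), and this is exactly where the two hypotheses on the matching are used. The only newly created edges are those incident to $v$, namely $(v, v_j)$ in color $i$ and $(v, u_l), (v, w_l)$ in color $l$ for $l \ne i$. Because $M$ is a matching, the endpoints $\{u_l, w_l\}_{l \ne i}$ are pairwise distinct, so no two new edges join $v$ to the same vertex; because $M$ avoids $v_j$, none of the $u_l, w_l$ equals $v_j$, so $(v, v_j)$ is distinct from all of them as well. Thus every edge incident to $v$ is a distinct unordered pair, no edge is repeated across two colors, and the realization remains edge-disjoint.

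Finally I would confirm that each color class is still a caterpillar. For color $i$, the vertex $v_j$ has color-$i$ degree $d_{i,j}-1 \ge 2$ in $G'$ (as $d_{i,j} > 2$), so it is a backbone vertex of the color-$i$ caterpillar, and attaching the new leaf $v$ to a backbone vertex preserves the caterpillar. For each color $l \ne i$, the operation replaces a single edge $e_l$ of the path $P^l$ by the two-edge path $u_l - v - w_l$; that is, it subdivides an edge of $P^l$ with the degree-$2$ vertex $v$. If $e_l$ is a backbone edge, $v$ becomes a new interior backbone vertex; if $e_l$ is one of the two end edges of $P^l$ (a leaf joined to a backbone endpoint), then $v$ becomes a new backbone endpoint carrying that leaf. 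In either case the non-leaf vertices still form a path, so color $l$ remains a caterpillar. Since the degrees match $\mathbf{D}$, the classes are edge-disjoint, and every class is a caterpillar, $G$ is a caterpillar realization of $\mathbf{D}$. I expect the main obstacle to be precisely the bookkeeping of the third paragraph: one must see that vertex-disjointness of the rainbow matching together with its avoidance of $v_j$ is exactly the condition guaranteeing that all new edges at $v$ are distinct unordered pairs, so that the $k$ caterpillars remain edge-disjoint.
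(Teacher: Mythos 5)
Your proposal is correct and uses exactly the construction in the paper's proof: add back the deleted vertex $v$, join it to $v_j$ with color $i$, and for each matching edge $e_l$ subdivide it through $v$ with color $l$, with simplicity guaranteed by the matching property and the avoidance of $v_j$. Your write-up merely spells out the degree bookkeeping and the caterpillar-preservation cases (backbone edge versus end edge of $P^l$, and $v_j$ being a backbone vertex since $d_{i,j}-1\ge 2$) that the paper leaves implicit.
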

\begin{proof}
We are going to explicitly construct the caterpillar realization of $\mathbf{D}$ from $G'$. We add a vertex $v$ to $G'$. Vertex $v_j$ is connected to $v$ with an edge of color $i$. For each edge $(u,w)$ in the rainbow matching, the edge is removed and both $u$ and $w$ are connected to $v$ with an edge of the color of the removed edge.

We claim this is a caterpillar realization of $\mathbf{D}$. Indeed, for each color, we got a caterpillar realization of the appropriate row. In case of color $i$, the caterpillar in $\mathbf{D}'$ is extended with a leaf, and the leaf is connected to a backbone vertex. For all other colors $l$, a degree $2$ vertex is inserted into $P^l$. The so-obtained tree is a caterpillar. No parallel edges are introduced, because the new edges are formed from $v$ and vertices incident to edges in a rainbow matching which specifically avoids $v_j$.
\end{proof}

\begin{theorem}\label{theo:3-caterpillar}
Let $\mathbf{D} \in \mathbb{N}^{3 \times n}$ be a tree degree matrix without common leaves. Then $\mathbf{D}$ has a caterpillar realization.
\end{theorem}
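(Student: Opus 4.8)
The plan is to mirror the inductive proof of Theorem~\ref{theo:case_k2}, replacing the single extendable edge by a rainbow matching of size $2$ and invoking Lemma~\ref{lem:caterpillar-induction}. I would induct on the number of vertices $n$. If all three rows are path degree sequences, then Lemma~\ref{lem:allpaths} produces edge disjoint path (hence caterpillar) realizations, which settles the base case; the few small values of $n$ that cannot be reduced are checked directly. So assume some row is not a path.

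For the inductive step, I would first locate the column to delete. Since not all rows are paths, Lemma~\ref{lem:v-vertex-existance} supplies a column whose entries sum to $2k-1 = 5$ and which has a $1$ in a non-path row $i$. As $\mathbf{D}$ has no common leaves, the other two entries of this column are each at least $2$ and sum to $4$, so the column is $(1,2,2)$ up to order --- exactly the shape required by Lemma~\ref{lem:caterpillar-induction}. Because row $i$ is not a path, it has an entry $d_{i,j} > 2$; let $v_j$ be the corresponding vertex. Deleting the located column and subtracting $1$ from $d_{i,j}$ yields $\mathbf{D}'$. A short verification shows $\mathbf{D}'$ is again a tree degree matrix without common leaves (every row sum drops by exactly $2$, the entry $d_{i,j}$ stays at least $2$ so no new leaf is created, and no column acquires two $1$'s). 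By the induction hypothesis $\mathbf{D}'$ has a caterpillar realization $G'$, and by Lemma~\ref{lem:caterpillar-induction} it suffices to find a rainbow matching of size $2$ in $P^{l_1}\cup P^{l_2}$ avoiding $v_j$, where $\{l_1,l_2\} = \{1,2,3\}\setminus\{i\}$.

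The existence of this rainbow matching is the heart of the argument, and I expect it to be the main obstacle. By Lemma~\ref{Lemma.lemma2.1} applied to $G'$ with $k=3$, each of $P^{l_1}$ and $P^{l_2}$ has length at least $2k-1 = 5$, so each contains at least $5-2 = 3$ edges avoiding $v_j$ (a path vertex meets at most two edges). Let $A$ be the set of color-$l_1$ edges of $P^{l_1}$ avoiding $v_j$ and $B$ the color-$l_2$ edges of $P^{l_2}$ avoiding $v_j$; then $|A|,|B|\ge 3$, and a rainbow matching of size $2$ avoiding $v_j$ is exactly a pair of disjoint edges, one from $A$ and one from $B$. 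Suppose no such pair exists, i.e.\ $A$ and $B$ are cross-intersecting. Since a linear forest with at least three edges has a matching of size $2$, pick disjoint $f_1=\{a,b\}, f_2=\{c,d\}\in A$. Cross-intersection forces every edge of $B$ to have one endpoint in $\{a,b\}$ and one in $\{c,d\}$, so $B$ lies among the four ``crossing'' pairs and, being part of a path, forms a $P_4$; a symmetric argument confines $A$. Running through the few resulting configurations, each one either exhibits two disjoint edges of opposite colors --- contradicting cross-intersection --- or forces a single edge to receive both colors $l_1$ and $l_2$, contradicting that $G'$ is an edge disjoint realization. Hence $A$ and $B$ cannot be cross-intersecting, and the desired rainbow matching exists.

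With the rainbow matching in hand, Lemma~\ref{lem:caterpillar-induction} extends $G'$ to a caterpillar realization of $\mathbf{D}$, completing the induction. The one place where $k=3$ is genuinely used is the length bound $2k-1 = 5$: it is precisely what guarantees three $v_j$-avoiding edges in each path, and the crossing analysis then needs the edge disjointness of the two colors to close the argument. For larger $k$ the two-path bound of Lemma~\ref{Lemma.lemma2.1} is too weak, which is presumably why Lemma~\ref{Lemma.lemma2.2} and a different matching argument become necessary.
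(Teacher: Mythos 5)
Your proposal is correct and follows essentially the same route as the paper: induction on $n$ with the all-paths case as the base (Lemma~\ref{lem:allpaths}), reduction via Lemma~\ref{lem:v-vertex-existance} and Lemma~\ref{lem:caterpillar-induction}, and the length-$5$ bound from Lemma~\ref{Lemma.lemma2.1} guaranteeing three $v_j$-avoiding edges per path. The only cosmetic difference is that you establish the size-$2$ rainbow matching by a cross-intersecting-family analysis (ruling out the $P_4$-in-a-$4$-cycle configurations), whereas the paper argues the same fact by a ``blocking'' count illustrated in Figure~\ref{fig:3+3edges}; the combinatorial content is identical.
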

\begin{proof}
The proof is again constructive, using an induction on the number of vertices. The base cases are the tree degree matrices in which each row is a path degree sequence.
In those cases, Lemma~\ref{lem:allpaths} provides edge disjoint path realizations.

Assume not all the rows are path degree sequences. According to Lemma~\ref{lem:v-vertex-existance}, there exists a column $l$ which contains, w.l.o.g., $1$ in the first row and $2$ in the other two rows. We can also assume that the first row is not a path degree sequence, implying there is a vertex $v_j$ such that $d_{1,j} \ge 3$. Consider $\mathbf{D}'$ obtained from $\mathbf{D}$ by removing column $l$ and subtracting $1$ from $d_{1,j}$. Matrix $\mathbf{D}'$ is a tree degree matrix without common leaves, and based on the inductive assumption, it has a caterpillar realization. Let the union of these caterpillars be denoted by $G'$. 

We want to find a rainbow matching in the remaining two rows avoiding $v_j$. The realized caterpillars of the second and third rows both contain a path of length at least $5$, according to Lemma~\ref{Lemma.lemma2.1}. In both paths, at most $2$ edges are incident to $v_j$, so there are at least $3$ edges in each caterpillar not incident to $v_j$. These $3$ edges form a path of length $3$ or a path of length $2$ with a separated edge. Suppose all three edges of these edges from one caterpillar are blocked by the other caterpillar. In both configurations, at most two of the three edges in one of the caterpillars can block all the $3$ edges in the other caterpillar, as shown in Figure~\ref{fig:3+3edges}. 
Therefore, there exists at least one of the three edges, call it $e_1$, not incident to $v_j$ and not adjacent to some other edge $e_2$ in the other caterpillar. Furthermore, $e_2$ is not incident to $v_j$. Therefore, $e_1$ and $e_2$ form a rainbow matching with two prescribed colors and avoid $v_j$. By Lemma~\ref{lem:caterpillar-induction}, $\mathbf{D}$ has a caterpillar realization.
\end{proof}
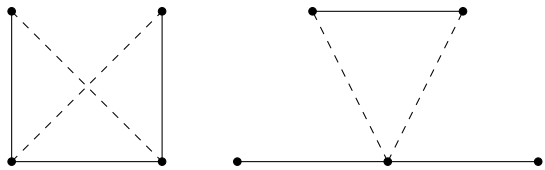
\begin{figure}
\setlength{\unitlength}{1cm}
\begin{center}
	\begin{tikzpicture}
\draw (0,2) -- (0,0) -- (2,0) -- (2,2);
\draw (3,0) -- (7,0);
\draw (4,2) -- (6,2);
\draw [dashed] (0,2) -- (2,0);
\draw [dashed] (0,0) -- (2,2);
\draw [dashed] (5,0) -- (4,2);
\draw [dashed] (5,0) -- (6,2);

\draw[black, fill= black] (0,0) circle(0.05);
\draw[black, fill= black] (2,0) circle(0.05);
\draw[black, fill= black] (0,2) circle(0.05);
\draw[black, fill= black] (2,2) circle(0.05);
\draw[black, fill= black] (3,0) circle(0.05);
\draw[black, fill= black] (5,0) circle(0.05);
\draw[black, fill= black] (7,0) circle(0.05);
\draw[black, fill= black] (4,2) circle(0.05);
\draw[black, fill= black] (6,2) circle(0.05);
\end{tikzpicture}
\end{center}
\caption{Only two dashed edges can both block three solid edges . See the text for details.}\label{fig:3+3edges}
\end{figure}

The proof for $k=4$ uses similar ideas, however, we need further base cases where $n \le 10$. Also, finding an appropriate rainbow matching is not easy. So, we separately present it in the following lemma.

\begin{lemma}
    Let $\mathbf{D}=\{D_1,D_2,D_3,D_4\} \in \mathbb{N}^{4\times n}$ be a tree degree matrix without common leaves. Let $G$ be a caterpillar realization of $\mathbf{D}$. Assume $v_j$ is an arbitrary vertex, and $G' \subset G$ is a caterpillar realization of an arbitrary three of the four tree degree sequences. If $n \geq 10$, there exists a rainbow matching of size three in ${G'\setminus \{v_j\}}$.
\label{lem:rainbow-k-4}
\end{lemma}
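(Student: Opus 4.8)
The plan is to locate the three matching edges one color at a time, using the two path-length lemmas to guarantee enough room. Since $\mathbf{D}$ has $k=4$ rows, Lemma~\ref{Lemma.lemma2.1} gives that each of the three caterpillars of $G'$ contains a path of length at least $2k-1=7$, and, because $n\ge 10=2k+2$, Lemma~\ref{Lemma.lemma2.2} applied to these three ($=k-1$) caterpillars gives that at least one of them, say the one of color $1$, contains a path of length at least $2k+1=9$. Denote by $P^1$ this long path and by $P^2,P^3$ paths of length at least $7$ in the other two caterpillars. A rainbow matching of size three avoiding $v_j$ is exactly a choice of pairwise vertex-disjoint edges $e_1\in P^1$, $e_2\in P^2$, $e_3\in P^3$, none incident to $v_j$.

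I would first record the following reduction. Suppose I have already found disjoint edges $e_2\in P^2$ and $e_3\in P^3$, both avoiding $v_j$. To finish, it suffices to find an edge of $P^1$ incident to none of the at most five vertices in $\{v_j\}\cup V(e_2)\cup V(e_3)$; equivalently, it suffices that these five vertices do \emph{not} form a vertex cover of $P^1$. A path with at least $9$ edges has minimum vertex cover of size at least $\lceil 9/2\rceil=5$. Hence if $v_j\notin V(P^1)$, the four vertices $V(e_2)\cup V(e_3)$ can never cover $P^1$, and any disjoint $e_2,e_3$ avoiding $v_j$ complete the matching; such a pair exists because, after deleting the at most two edges meeting $v_j$, each of $P^2,P^3$ still has at least five edges, so one may pick $e_2$ freely and then choose $e_3$ among the at least $5-4\ge 1$ edges of $P^3$ disjoint from it.

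The remaining, and main, difficulty is the case $v_j\in V(P^1)$, where $\{v_j\}\cup V(e_2)\cup V(e_3)$ has five vertices and could in principle be a minimum vertex cover of $P^1$. Here I would exploit the freedom in choosing $e_2,e_3$. Writing $Q:=P^1\setminus v_j$, a union of at most two subpaths with at least $7$ edges in total, it is enough to arrange that $V(e_2)\cup V(e_3)$ meets at most six edges of $Q$, since then an unmet edge of $Q$ serves as $e_1$. As any two vertices meet at most four edges of a path-forest, this is guaranteed as soon as $e_2$ can be chosen so that $V(e_2)$ meets at most two edges of $Q$ — for instance whenever some edge of $P^2$ avoiding $v_j$ has an endpoint off $Q$. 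The genuinely tight subcase is when $P^2$ (and symmetrically $P^3$) is almost vertex-aligned with $Q$, so that every available edge meets three or more edges of $Q$; I expect this to be the heart of the argument, handled by a short case analysis that uses the rigidity of size-$5$ covers of a $9$-edge path (their complements are the few maximum independent sets), together with the symmetry between $P^2$ and $P^3$ and the freedom, granted in the definition of the paths $P^l$, to choose which leaves are attached as their end edges. Once a suitable pair $e_2,e_3$ is produced, the reduction of the second paragraph yields $e_1$, and hence the desired rainbow matching of size three in $G'\setminus\{v_j\}$.
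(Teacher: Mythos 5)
Your argument splits on whether $v_j$ lies on the longest path $P^1$, and the half you actually prove --- $v_j \notin V(P^1)$ --- is correct and cleanly packaged: the observation that four vertices cannot cover a path with $9$ edges (minimum vertex cover $\lceil 9/2\rceil = 5$) is a nice way to finish that case, and it matches what the paper does in its Case~3. But there is a genuine gap at exactly the point you flag yourself: when $v_j \in V(P^1)$, you only carry out a reduction (``it suffices that $V(e_2)\cup V(e_3)$ meets at most six edges of $Q$'') and then defer the tight subcase --- $P^2$ and $P^3$ vertex-aligned with $P^1$ --- to a ``short case analysis'' that you ``expect'' to work but never perform. This subcase is not a rare corner: when $n=10$ the path $P^1$ has $9$ edges and is therefore Hamiltonian, so \emph{every} blue and red edge has both endpoints on $P^1$, and your preferred move (choosing $e_2$ with an endpoint off $Q$) is impossible. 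Since $n=10$ is precisely the boundary the lemma must handle for the induction in Theorem~\ref{Theorem.theorem2.4}, the unproven subcase is the heart of the lemma, not a loose end.

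What is missing is a structural input beyond path lengths, and it is exactly where the paper's proof invokes the no-common-leaves hypothesis: the two endpoints of the longest (green) path are leaves of the green caterpillar, hence non-leaves in the other two caterpillars, hence each of them is incident to \emph{two blue and two red edges}. These eight ``endpoint edges'' --- which are typically legs of the blue and red caterpillars and need not lie on your paths $P^2,P^3$ at all --- anchor the paper's entire case analysis (three scenarios according to how many endpoint edges $v_j$ blocks when $v_j$ is internal to the green path, plus a separate argument when $v_j$ is a green leaf). Note that the lemma only asks for a rainbow matching in $G'\setminus\{v_j\}$, so the matching may use any edges of the three caterpillars; by restricting yourself to edges of the chosen paths you discard exactly the resource that resolves the tight case. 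To complete your proof you would either have to actually carry out the deferred path-only analysis --- whose feasibility from path-length and vertex-cover counting alone is unclear, since edge-disjointness of the three caterpillars would have to be exploited in a delicate way --- or import the paper's endpoint-edge observation, at which point you would essentially be reproducing the paper's proof.
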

\begin{proof}

By applying Lemma \ref{Lemma.lemma2.2} and choosing $k=4$, we derive a special case for four tree degree sequences. Within any three out of four tree degree sequences, there exists a path of length at least 9. Let the degree sequence with the longest path be colored green, and the other two be colored blue and red. %Let $S$ be the rainbow matching set.
We have three cases. Case 1: $v_j$ is an internal node of the green degree sequence. Case 2: $v_j$ is a leaf of the green degree sequence. Case 3: The green path does not contain $v_j$. We will illustrate these three cases separately.

\begin{figure}[H]

\setlength{\unitlength}{1cm}
\begin{center}
\begin{tikzpicture}
\draw (-4,0) -- (-3,0) -- (-2, 0) -- (-1, 0);
\draw (1,0) -- (2,0) -- (3,0) -- (4,0) -- (5,0);

\draw [dashed] (-1, 0) -- (0,0) -- (1,0);

\draw [gray, ultra thick] (-5,-1) -- (-4,0);
\draw [gray, ultra thick] (-4,-1.4) -- (-4,0);

\draw [gray, ultra thin] (-5,1) -- (-4,0);
\draw [gray, ultra thin] (-4,1.4) -- (-4,0);

\draw [gray, ultra thick] (6,-1) -- (5,0);
\draw [gray, ultra thick] (5,-1.4) -- (5,0);

\draw [gray, ultra thin] (6,1) -- (5,0);
\draw [gray, ultra thin] (5,1.4) -- (5,0);

\draw[gray, fill= gray] (-5,-1) circle(0.07);
\draw[gray, fill= gray] (-4,-1.4) circle(0.07);
\draw[gray, fill= gray] (6,-1) circle(0.07);
\draw[gray, fill= gray] (5,-1.4) circle(0.07);

\draw[gray, fill= gray] (-5,1) circle(0.03);
\draw[gray, fill= gray] (-4,1.4) circle(0.03);
\draw[gray, fill= gray] (6,1) circle(0.03);
\draw[gray, fill= gray] (5,1.4) circle(0.03);

\draw[black, fill= black] (-4,0) circle(0.05);
\draw[black, fill= black] (-3,0) circle(0.05);
\draw[black, fill= black] (-2,0) circle(0.05);
\draw[black, fill= black] (-1,0) circle(0.05);
\draw[black, fill= black] (0,0) circle(0.05) node[anchor=south] {$v_j$};
\draw[black, fill= black] (1,0) circle(0.05);
\draw[black, fill= black] (2,0) circle(0.05);
\draw[black, fill= black] (3,0) circle(0.05);
\draw[black, fill= black] (4,0) circle(0.05);
\draw[black, fill= black] (5,0) circle(0.05);

\end{tikzpicture}
\end{center}

\caption{The graph shows the situation when $v_j$ is an internal node of the green path (longest path draw in black). The thick gray path represents the blue path, while the thin gray edges represent the red edges. The dotted edges represent the blocked edges.}
\label{fig:basicIllustration}

\end{figure}
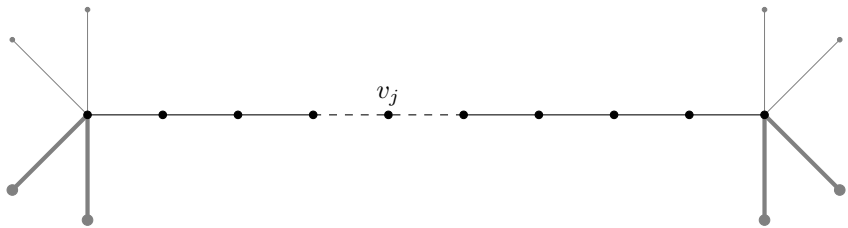

Fig. \ref{fig:basicIllustration} illustrates the first general case when $v_j$ is an internal node of the green degree sequence. It only includes the longest path in green degree sequence. As we are considering the graph $G'\setminus \{v_j\}$, the edges connected to $v_j$ are not considered. Two endpoints of the green path are leaves. So, they can not be leaves of the blue and red degree sequences. Hence, these endpoints must each be adjacent to two red and two blue edges. Call those eight edges endpoint edges. We have three possible scenarios.\\

Scenario 1: Less than two of the endpoint edges are incident to $v_j$.

At most one endpoint edge is blocked by $v_j$. Assume the color of this edge is blue. Consider the two red endpoint edges at this end of the green path, and the two blue endpoint edges at the opposite end of the green path. If none of these endpoint edges are incident to the two endpoints of the green path, then choose one of the red edges. It blocks at most one of the blue edges, so we have a pair of red and blue edges which are not adjacent. If one of these endpoint edges are incident to the two endpoints of the green path, then w.l.o.g. we can assume that it is a blue edge. Select the other blue endpoint edge, it blocks at most one of the red edges, therefore we again have a red and a blue edge that are not adjacent. For the green edges, we know that each blue and red edge in our rainbow matching set will block one leaf in green and at most another two edges in the green path. Also, $v_j$ blocks two green edges. Altogether, at most eight edges in the green path are blocked and there must exist one green edge that we can select. Therefore, we will find a rainbow matching of size three. \\

Scenario 2: Two endpoint edges of the same color are incident to $v_j$.

W.l.o.g., we can assume that the two endpoint edges incident to $v_j$ are blue. Select any of the red endpoint edges, call it $e$. It is adjacent to at most $4$ blue edges, one of these blue edges is also incident to $v_j$, and there is another blue edge incident to $v_j$. However, there are at least $7$ blue edges in the path of the blue caterpillar. So, there must be at least $2$ blue edges which are neither adjacent to $e$ nor incident to $v_j$, call them $f_1$ and $f_2$. The vertex $v_j$ blocks $2$ green edges from the green path. Edge $e$ blocks at most $3$ green edges from the green path. There are at least $4$ remaining green edges. It is impossible that both $f_1$ and $f_2$ blocks all these $4$ edges. Select a blue edge from $\{f_1,f_2\}$ that does not block the green edges incident to $e$ or $v_j$. Also, select $e$ and the green edge that is not adjacent to the selected blue edge, $e$, or $v_j$. These three edges form the appropriate rainbow matching.\\
%The general idea is exactly the same as situation 1. Assume the endpoint edges are blue. Observe that the other two blue endpoint edges cannot be the same otherwise there would be a cycle of length $3$, however, we know that the blue edges coming from a tree, which does not contain any cycle. So select any of the blue edges not incident to $v_j$. Then, we will have exactly the same setting as what we have in scenario 1. Following the same construction, we find a rainbow matching of size three.\\

Scenario 3: Two endpoint edges of different color are adjacent to $v_j$.

%From both ends of the green path, there must exist one color with only one edge that is not blocked by $v_j$. Consider those two edges. Select an edge that is not adjacent to the other end of out green path to be in $S$. Now we have scenario 1.
%
In this scenario, there are two blue edges and a red edge adjacent to one end of the green path. There are two red edges and a blue edge adjacent to the other end of the green path. None of these blue of red edges are incident to $v_j$. Even if one of these edges are the same (the two ends of the green path are connected with a red or a blue edge), there is an edge at one of the ends of the green path, w.l.o.g., we can say it is a blue edge, and there are two red edges at the other end. The blue edge can block at most one of the red edges. We have disjoint red and blue edges at the two ends of the green path. They block at most $6$ of the green edges, and $v_j$ blocks two of the green edges. So, there a green edge not adjacent to the selected red and blue edges and not incident to $v_j$. A pair of these disjoint red and blue edges, along with the green edge, form the appropriate rainbow matching.

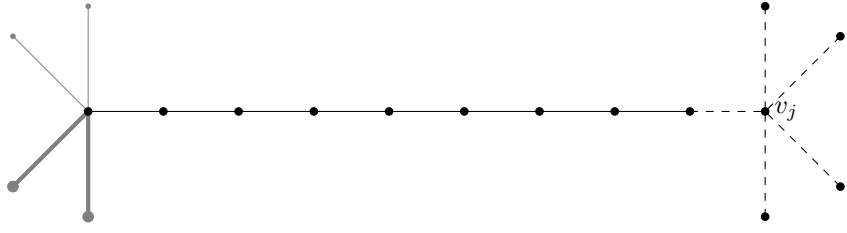
\begin{figure}[H]

\setlength{\unitlength}{1cm}
\begin{center}
\begin{tikzpicture}
\draw (-4,0) -- (-3,0) -- (-2, 0) -- (-1, 0) -- (0,0) -- (1,0) -- (2,0) -- (3,0) -- (4,0);

\draw [dashed] (4,0) -- (5,0);

\draw [gray, ultra thick] (-5,-1) -- (-4,0);
\draw [gray, ultra thick] (-4,-1.4) -- (-4,0);

\draw [gray, ultra thin] (-5,1) -- (-4,0);
\draw [gray, ultra thin] (-4,1.4) -- (-4,0);

\draw [dashed] (6,-1) -- (5,0);
\draw [dashed] (5,-1.4) -- (5,0);

\draw [dashed] (6,1) -- (5,0);
\draw [dashed] (5,1.4) -- (5,0);

\draw[gray, fill= gray] (-5,-1) circle(0.07);
\draw[gray, fill= gray] (-4,-1.4) circle(0.07);
\draw[black, fill= black] (6,-1) circle(0.05);
\draw[black, fill= black] (5,-1.4) circle(0.05);

\draw[gray, fill= gray] (-5,1) circle(0.03);
\draw[gray, fill= gray] (-4,1.4) circle(0.03);
\draw[black, fill= black] (6,1) circle(0.05);
\draw[black, fill= black] (5,1.4) circle(0.05);

\draw[black, fill= black] (-4,0) circle(0.05);
\draw[black, fill= black] (-3,0) circle(0.05);
\draw[black, fill= black] (-2,0) circle(0.05);
\draw[black, fill= black] (-1,0) circle(0.05);
\draw[black, fill= black] (0,0) circle(0.05);
\draw[black, fill= black] (1,0) circle(0.05);
\draw[black, fill= black] (2,0) circle(0.05);
\draw[black, fill= black] (3,0) circle(0.05);
\draw[black, fill= black] (4,0) circle(0.05);
\draw[black, fill= black] (5,0) circle(0.05) node[anchor=west] {$v_j$};

\end{tikzpicture}
\end{center}
\caption{The graph shows the situation when $v_j$ is a leaf of the green path (longest path draw in black). The thick gray path represents the blue path, while the thin gray edges represent the red edges. The dotted edges represent the blocked edges.}
\label{fig:case2Illustration}

\end{figure}
Fig. \ref{fig:case2Illustration} illustrates the second general case. In this case, the edges adjacent to one end of the green path are all blocked. One of the blocked edges is a green edge in the path, and at least four of the blocked edges are from the remaining two colors.

Consider the other end of the green path. At most one color has an edge that connects to the other end of the path. Assume that edge has color blue. If no edge connects to the other end of the path, choose one arbitrary edge as blue.  Select a blue edge that is not adjacent to the other end of the green path as our first edge for the rainbow matching set. Assume the other end of the blue edge is $v_i$. Then, we need to find a red edge that is not adjacent to either ends of the green path or $v_i$. By Lemma. \ref{Lemma.lemma2.1}, the red path must contain at least 7 edges. Each of the three vertices will block at most two red edges in the red path so there must exist one red edge left over. Select that red edge to be in the rainbow matching set. 

Now we find the green edge. The blue edge blocks one green leaf and another two green edges. The red edge will block four green edges. Also, $v_j$ blocks one green edge. Altogether, at most eight green edges are blocked. Since there are nine green edges, we can always select one green edge to put in the rainbow matching set. We constructed the appropriate rainbow matching set of size three.

Finally, in Case 3, $v_j$ is not on the green path. In that case, we can find disjoint red and blue edges not incident to $v_j$, see the proof of Theorem~\ref{theo:3-caterpillar}. These two edges block at most $8$ edges from the green path, so there is a green edge which is not adjacent to the selected red and blue edges and also not incident to $v_j$. 

\end{proof}

% I took out this theorem
\begin{comment}
\begin{theorem}
   Let $D=D_1,D_2,D_3,D_4$ be a realization of four tree degree sequence without common leaves. Assume $v_j$ is an arbitrary vertex and $G$ is the graph formed by an arbitrary three of the four tree degree sequences. If $n=9$, there exists a rainbow matching of size three in ${G/{v_j}}$.
\end{theorem}

\begin{proof}
If this is proved, we only need to prove the case n=8 and n=9.
\end{proof}
\end{comment}

We are now ready to prove Conjecture~\ref{con:caterpillar} for $k=4$.

\begin{theorem}
Let  $\mathbf{D} \in \mathbb{N}^{4\times n}$ be a tree degree matrix without common leaves. Then $\mathbf{D}$ has a caterpillar realization.
 
 \label{Theorem.theorem2.4}
    
\end{theorem}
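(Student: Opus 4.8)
The plan is to prove the statement by induction on the number of vertices $n$, exactly mirroring the arguments for $k=2$ and $k=3$ (Theorems~\ref{theo:case_k2} and~\ref{theo:3-caterpillar}). First I would record that a $4\times n$ tree degree matrix without common leaves forces $n\ge 8$, since the four trees contribute at least $8$ leaves and these must sit in distinct columns. The true base case is when every row is a path degree sequence: Lemma~\ref{lem:allpaths} then supplies edge disjoint path realizations, which are automatically caterpillar realizations. A short counting argument using $\sum_j d_{i,j}=2n-2$ and the no-common-leaves condition shows that $n=8$ actually forces every tree to have exactly two leaves, hence to be a path, so $n=8$ is subsumed by this base case.

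For the inductive step I would treat all $n$ large enough that the reduction can feed Lemma~\ref{lem:rainbow-k-4}. Assume not every row is a path. By Lemma~\ref{lem:v-vertex-existance} there is a column of sum $2k-1=7$ containing a $1$ in a non-path row; because the matrix has no common leaves, the other three entries are each at least $2$ and sum to $6$, so the column is exactly $(1,2,2,2)$ up to reordering, matching the hypothesis of Lemma~\ref{lem:caterpillar-induction}. Let row $i$ be the non-path row holding this $1$, and pick $v_j$ with $d_{i,j}\ge 3$. Deleting this column and subtracting $1$ from $d_{i,j}$ yields a $4\times(n-1)$ tree degree matrix $\mathbf{D}'$ without common leaves, which by the induction hypothesis has a caterpillar realization $G'$. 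Provided $n-1\ge 10$, Lemma~\ref{lem:rainbow-k-4} applied to $\mathbf{D}'$, $G'$, and $v_j$ (restricted to the three colors other than $i$) produces a rainbow matching of size $k-1=3$ avoiding $v_j$; Lemma~\ref{lem:caterpillar-induction} then extends $G'$ to a caterpillar realization of $\mathbf{D}$. Thus the inductive step closes for every $n\ge 11$.

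The main obstacle is the remaining base cases $n\in\{9,10\}$, where the reduced matrix $\mathbf{D}'$ has only $8$ or $9$ vertices and Lemma~\ref{lem:rainbow-k-4} no longer applies, since its hypothesis needs at least $10$ vertices. I would dispose of these by a finite case analysis driven by the leaf distribution of the four trees, which is tightly constrained: for $n=9$ at most one tree has three leaves while the others are paths, and for $n=10$ only a small number of leaf patterns occur. For each admissible pattern I would either apply the same reduction when a $(1,2,2,2)$ column can be removed so that the concrete short caterpillars realizing $\mathbf{D}'$ still contain a rainbow matching of size $3$ avoiding $v_j$ (verified directly, since the backbones and their end leaves are explicit), or else give an explicit edge disjoint caterpillar construction. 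I expect this bookkeeping, namely enumerating the leaf patterns for $n=9,10$ and checking a size-$3$ rainbow matching avoiding $v_j$ in each reduced realization, to be the most delicate part, precisely because the rainbow-matching lemma has no slack in that range.
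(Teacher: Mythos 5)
Your proposal takes essentially the same route as the paper: the same induction using Lemma~\ref{lem:v-vertex-existance} to extract a $(1,2,2,2)$ column, the reduction to $\mathbf{D}'$, Lemma~\ref{lem:rainbow-k-4} for the size-$3$ rainbow matching avoiding $v_j$ when $n\ge 11$, and Lemma~\ref{lem:caterpillar-induction} to extend, with all-path matrices as the base case via Lemma~\ref{lem:allpaths}. The small cases $n\le 10$ that you correctly flag as the delicate remainder are dispatched in the paper exactly as you propose, by finite enumeration: the Appendix lists all $14$ tree degree matrices without common leaves on at most $10$ vertices (your leaf-pattern counts for $n=8,9,10$ agree with that list) and gives an explicit caterpillar realization for each.
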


\begin{proof}

The proof is constructive and based on induction. The base cases of the induction are those tree degree matrices that contain only path degree sequences and the matrices with at most $10$ vertices. If all rows are path degree sequences, then there exists a caterpillar realization by Lemma~\ref{lem:allpaths}. Up to permuting rows and columns, there are only $14$ tree degree matrices without common leaves. In the Appendix, we list these matrices and give a realization for each of them.

Now assume that $\mathbf{D} \in \mathbb{N}^{4 \times n}$ is a tree degree matrix without common leaves, where $n \ge 11$ and there is at least one row which is not a path degree sequence. Then there exists a column $l$ that contains a $1$ in a row not containing a path degree sequence, and all other entries in the column are $2$, according to Lemma~\ref{lem:v-vertex-existance}. Let $i$ be the row such that $d_{i,l} = 1$, and let $j$ be a column for which $d_{i,j} > 2$. Construct $\mathbf{D}'$ in the following way: remove column $l$, and subtract $1$ from $d_{i,j}$. Then $\mathbf{D}'$ is a tree degree matrix without common leaves, and based on the inductive assumption, it has a caterpillar realization. Let $G'$ be such a realization. According to Lemma~\ref{lem:rainbow-k-4}, the paths in the caterpillar realizations with color other than $i$ contain a rainbow matching avoiding $v_j$. By Lemma~\ref{lem:caterpillar-induction}, $\mathbf{D}$ has a caterpillar realization.

\end{proof}

\section{Degree Sequences on Large Amount of Vertices}

%For more than four tree degree sequences, our previous induction method no longer works. We do 
%not know that there exists a rainbow matching set of size $k-1$ within an arbitrary $k-1$ of the 
%trees.
For more than four tree degree sequences on a small number of vertices, it is hard to prove the existence of a rainbow matching of size $k-1$ within an arbitrary $k-1$ of the caterpillar realizations, while avoiding a prescribed vertex. It has been proved that edge disjoint tree realizations exist for any $\mathbf{D} \in \mathbb{N}^{k\times n}$ tree degree matrix without common leaves with $n\ge 4k-1$ if edge disjoint tree realizations exist for any $\mathbf{D} \in \mathbb{N}^{k\times (4k-2)}$ tree degree matrix without common leaves \cite{ghm}. We can prove a similar theorem with caterpillar realizations. For this, we need one more lemma on the lower bound of the length of the paths in caterpillar realizations.

\begin{lemma}\label{lem:inc-cat-paths}
Let $G$ be a caterpillar realization of $\mathbf{D} \in \mathbb{N}^{k\times n}$. Consider any $k-1$ of its caterpillars, and arrange them into increasing order based on the length of their paths containing their backbones, and the edges connecting leaves to the ends of the backbone. Then the $l^{th}$ longest path contains at least $(\frac{l-1}{l})n+2$ edges.
\end{lemma}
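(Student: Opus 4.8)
The plan is to translate the statement about path lengths into a statement about numbers of leaves, and then run a short averaging argument that exploits the no--common--leaves hypothesis (which is in force throughout this section). First I would record the exact dictionary between the distinguished path of a caterpillar and its leaves. If a caterpillar $C$ has a backbone with $b$ vertices, then each end of the backbone is a non-leaf vertex of degree at least $2$ and so carries at least one leaf; extending the backbone by one such leaf at each end produces exactly the path $P$ described in the statement, which therefore has $b+1$ edges. Since $C$ has $n-b$ leaves, its path has $n-\ell(C)+1$ edges, where $\ell(C)$ denotes the number of leaves of $C$. Consequently, ordering the chosen $k-1$ caterpillars by increasing path length is the same as ordering them by decreasing number of leaves, and the $l$-th path in the statement corresponds to the caterpillar whose leaf count $\ell_{[l]}$ is the $l$-th largest among the $k-1$.

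Next I would bound $\ell_{[l]}$ from above. Because $\mathbf{D}$ has no common leaves, no vertex is a leaf of two distinct caterpillars, so the leaf sets are pairwise disjoint subsets of the $n$ vertices; in particular the total number of leaves over all caterpillars is at most $n$. By the definition of $\ell_{[l]}$ there are at least $l$ of the chosen caterpillars whose leaf count is at least $\ell_{[l]}$, and their leaf sets being disjoint forces $l\,\ell_{[l]} \le n$, that is $\ell_{[l]} \le n/l$. Substituting into the edge count $n-\ell_{[l]}+1$ shows that the $l$-th path has at least $\frac{l-1}{l}n + 1$ edges, which is the heart of the estimate. As a consistency check, taking $l=k-1$ recovers, and for large $n$ strengthens, the linear-in-$k$ guarantee of Lemma~\ref{Lemma.lemma2.1}.

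To push the additive constant from $1$ toward the stated $2$, I would sharpen the global leaf count. Every caterpillar is a tree on $n\ge 2$ vertices and hence has at least two leaves, all disjoint from the others. Crediting the two guaranteed leaves of each of the $k-l$ caterpillars lying outside the top $l$ improves the inequality to $l\,\ell_{[l]} \le n - 2(k-l)$, which raises the additive term of the path bound to $1 + \tfrac{2(k-l)}{l}$.

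The main obstacle is obtaining the clean constant $2$ uniformly in $l$. The averaging argument above yields $1 + \tfrac{2(k-l)}{l}$, which comfortably exceeds the target $2$ when $l$ is small (the short paths) but degrades as $l$ approaches $k-1$ (the longest paths), where few spare leaves remain to be credited. Closing this gap would require a more local argument---for instance, showing that the backbone of a long path must contain an end vertex that is a leaf of none of the top caterpillars, supplying the missing unit---or else settling the convention for "length": if the path length is read as a vertex count, the complementary relation becomes $n-\ell+2$ and the bound $\ell_{[l]}\le n/l$ already delivers $\frac{l-1}{l}n+2$ with no further work. I would fix this convention at the outset, since it decides whether any refinement beyond the disjointness inequality is needed at all.
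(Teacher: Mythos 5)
Your argument is the same one the paper uses: convert path length into leaf count via the dictionary (path edges) $= n-\ell(C)+1$, use the no-common-leaves hypothesis to make the leaf sets pairwise disjoint, and credit two leaves to every caterpillar outside the top $l$. The paper merely packages this as a contradiction: if the $l$-th path had at most $\frac{l-1}{l}n+1$ edges, then each of the $l$ caterpillars with the shortest paths would have at least $\frac{n}{l}$ leaves, so the total leaf count would be at least $l\cdot\frac{n}{l}+2(k-l) = n+2(k-l) > n$, impossible. Unwinding that contradiction yields exactly your inequality $l\,\ell_{[l]}+2(k-l)\le n$; the two proofs are identical in content.

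The place where you stalled --- getting the additive constant $2$ rather than $1+\frac{2(k-l)}{l}$ --- is not a defect of your method but an imprecision in the paper. What the paper's contradiction actually establishes is that the $l$-th path has \emph{more than} $\frac{l-1}{l}n+1$ edges; the paper silently rounds this up to ``at least $\frac{l-1}{l}n+2$,'' which is legitimate for an integer edge count only when $\frac{l-1}{l}n$ is itself an integer, i.e.\ when $l$ divides $n$. Your direct computation keeps the term $\frac{2(k-l)}{l}$ visible and shows that, as a real-number bound, the argument genuinely falls short of $+2$ once $l>\frac{2k}{3}$ (possible exactly when $k\ge 4$), just as you observed. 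So you should not search for a ``more local argument'' to supply the missing unit: the paper does not have one either. Note also that the only place Lemma~\ref{lem:inc-cat-paths} is invoked (Phase II of the large-$n$ theorem) requires just that the backbone being completed, which is the $4$th shortest or longer, have more than $\frac{3n}{4}$ vertices; your bound already gives backbone vertices $= n-\ell_{[l]} \ge n-\frac{n-2(k-l)}{l} > \frac{l-1}{l}n \ge \frac{3n}{4}$ for $l\ge 4$, so nothing downstream is harmed. Your closing observation --- that the clean $+2$ holds verbatim if ``length'' is read as a vertex count, since path vertices $= n-\ell+2$ and $\ell_{[l]}\le n/l$ --- is exactly the right way to repair the statement.
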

\begin{proof}
The proof is based on contradiction. Assume that $l^{th}$ longest path has at most $(\frac{l-1}{l})n+1$ edges. Then this caterpillar has at most $(\frac{l-1}{l})n$ internal nodes, and thus, at least $\frac{n}{l}$ leaves. Since the length of the paths are in increasing order, there are at least $l$ caterpillars with at least $\frac{n}{l}$ leaves. The other $k-l$ caterpillars have at least two leaves. Then there are at least
$$
l\times \frac{n}{l}+ (k-l)\times 2 = n + 2(k-l) > n
$$
leaves altogether, which produces a contradiction for $\mathbf{D}$ is a tree degree matrix without common leaves.
\end{proof}
\begin{theorem}
Let $k$ be an arbitrary positive integer. If any $\mathbf{D}' \in \mathbb{N}^{k\times (4k-2)}$ tree degree matrix without common leaves has a caterpillar realization, then any $\mathbf{D}\in \mathbb{N}^{k\times n}$ tree degree matrix without common leaves and $n\ge 4k-1$ has a caterpillar realization.
\end{theorem}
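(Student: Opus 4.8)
The plan is to prove the statement by strong induction on the number of vertices $n \ge 4k-1$, mirroring the reductions used in Theorems~\ref{theo:case_k2}, \ref{theo:3-caterpillar}, and \ref{Theorem.theorem2.4}. Every reduction step decreases $n$ by one, so it will eventually land either on a matrix covered by the induction hypothesis (when $n-1 \ge 4k-1$) or on the hypothesized base level (when $n-1 = 4k-2$), which is exactly the assumption of the theorem. If every row of $\mathbf{D}$ is a path degree sequence, Lemma~\ref{lem:allpaths} already gives a caterpillar realization, so I may assume some row is not a path and invoke Lemma~\ref{lem:v-vertex-existance}; the trivial case $k=1$ is handled directly.

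First I would carry out the reduction. By Lemma~\ref{lem:v-vertex-existance} there is a column of column-sum $2k-1$ consisting of a single $1$ in a non-path row $i$ and a $2$ in every other row; choose $j$ with $d_{i,j} > 2$. Deleting this column and subtracting $1$ from $d_{i,j}$ produces $\mathbf{D}' \in \mathbb{N}^{k \times (n-1)}$. A short check (each row sum drops to $2(n-1)-2$, the modified entry $d_{i,j}-1 \ge 2$ stays a non-leaf, and no new leaf is created) shows $\mathbf{D}'$ is again a tree degree matrix without common leaves, with $n-1 \ge 4k-2$. Hence $\mathbf{D}'$ has a caterpillar realization $G'$, either by the theorem's hypothesis (if $n-1 = 4k-2$) or by the induction hypothesis (if $n-1 \ge 4k-1$). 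It then suffices, by Lemma~\ref{lem:caterpillar-induction}, to exhibit in $G'$ a rainbow matching of size $k-1$ inside $\bigcup_{l \ne i} P^l$ avoiding $v_j$.

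The hard part is producing this rainbow matching, and this is where Lemma~\ref{lem:inc-cat-paths} enters. I would order the $k-1$ caterpillars of colors $\ne i$ by increasing path length as $P^{(1)}, \ldots, P^{(k-1)}$ and build the matching greedily, shortest path first. When selecting the edge in $P^{(l)}$, the $l-1$ already-chosen edges forbid at most $4(l-1)$ of its edges (each of their $2(l-1)$ endpoints meets at most two edges of $P^{(l)}$), and $v_j$ forbids at most $2$ more, so at most $4l-2$ edges of $P^{(l)}$ are unavailable; a free edge therefore exists as soon as $|P^{(l)}| \ge 4l-1$. For $l \ge 2$, Lemma~\ref{lem:inc-cat-paths} applied to $G'$ gives $|P^{(l)}| \ge \frac{l-1}{l}(n-1)+2$, and since $l \le k-1$ forces $n-1 \ge 4k-2 \ge 4l+2$, this lower bound is at least $4l - \frac{2}{l} \ge 4l-1$, exactly as needed.

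The genuinely delicate case is $l = 1$, where Lemma~\ref{lem:inc-cat-paths} only yields $|P^{(1)}| \ge 2$, which is not enough: $v_j$ alone can block both edges of a length-$2$ path (a star centered at $v_j$). To rule this out I would invoke Lemma~\ref{Lemma.lemma2.1}, which guarantees that every caterpillar contains a path of length at least $2k-1 \ge 3$ when $k \ge 2$; thus $|P^{(1)}| \ge 3$, and deleting the at most two $v_j$-incident edges still leaves a usable edge. (Equivalently, a leaf-counting argument shows that none of the $k-1$ caterpillars can be a star once $k \ge 2$.) With all $k-1$ greedy choices justified, the required rainbow matching exists, Lemma~\ref{lem:caterpillar-induction} extends $G'$ to a caterpillar realization of $\mathbf{D}$, and the induction closes.
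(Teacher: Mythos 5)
Your proposal is correct and follows essentially the same route as the paper's proof: induction with the all-path matrices and the $k\times(4k-2)$ matrices as base cases, the reduction via Lemma~\ref{lem:v-vertex-existance}, a greedy rainbow matching built shortest-path-first using the length bound of Lemma~\ref{lem:inc-cat-paths}, and the conclusion via Lemma~\ref{lem:caterpillar-induction}; your bookkeeping (blocking at most $4(l-1)+2$ path edges) is just a minor variant of the paper's (blocking matching edges inside each path). In fact your explicit treatment of $l=1$ via Lemma~\ref{Lemma.lemma2.1} is more careful than the paper's, whose displayed greedy bound $\lceil 2(l-1)(2l+1)/(2l)\rceil+1 \ge 2(l-1)+2$ fails at $l=1$ (it reads $1\ge 2$ there, since Lemma~\ref{lem:inc-cat-paths} alone only guarantees a path of length $2$ for the shortest caterpillar), so your patch closes a gap the paper glosses over.
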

\begin{proof}
The proof is still based on induction. The base cases are the tree degree matrices in which each row is a path degree sequence and the tree degree matrices have dimension $k \times (4k-2)$. For any other tree degree matrix $\mathbf{D}$, we can construct the corresponding $\mathbf{D}'$ matrix (as we did in the proofs of the previous theorems), which has a realization $G'$. Next, we need to find a rainbow matching in the paths of $k-1$ selected caterpillars that avoids a prescribed vertex $v_j$. We claim we can find this rainbow matching in a greedy way. Arrange the caterpillars in increasing order based on the length of their longest path. We know that the $l^{\mathrm{th}}$ caterpillar has a path of length at least $(\frac{l-1}{l})n+2$, according to Lemma~\ref{lem:inc-cat-paths}. Therefore, it has a matching of size at least $\left\lceil \frac{(\frac{l-1}{l})n+2}{2}\right\rceil = \left\lceil \frac{(l-1)n}{2l}\right\rceil+1$. We know that $n$ is at least $4k-2$ and $k$ is at least $l+1$, thus the matching has a size at least
$$
\left\lceil \frac{2(l-1)(2l+1)}{2l}\right\rceil +1 \ge 2(l-1)+2.
$$
We already selected $l-1$ edges in the rainbow matching that block at most $2(l-1)$ edges in the matching of the $l^{\mathrm{th}}$ color. Vertex $v_j$ can block at most one edge. Therefore, we have an edge of the $l^{\mathrm{th}}$ color that is not adjacent to any of the so-far selected edges, and is not incident to vertex $v_j$. We can select this edge for the rainbow matching.

Since we are able to find a rainbow matching avoiding $v_j$, $\mathbf{D}$ has a caterpillar realization, according to Lemma~\ref{lem:caterpillar-induction}.

\end{proof}

We also introduce a theorem that unconditionally claims the existence of caterpillar realizations with large number of vertices.  
For this theorem, we have a new strategy to directly construct the caterpillar realization for $\mathbf{D}$. Treat a caterpillar as the union of leaves and a backbone. We define \emph{leg} as the edges that incident to a leaf. We also define \emph{backbone edges} as all the other edges. The construction strategy is to first construct all the legs and $3$ backbones and then the remaining backbones. The key point is that we will be able to find backbones as Hamiltonian paths in appropriate subgraphs that we obtain by removing the so-far used edges from the complete graph on the given backbone vertices. In general, we will denote this subgraph as $F$. 
The existence of these Hamiltonian paths is proved by a theorem similar to Ore's theorem, which states that for a finite and simple graph $G$, if $d_i + d_j \geq n$  for every pair of distinct non-adjacent vertices $i$ and $j$ of $G$, then $G$ must contain a Hamiltonian cycle \cite{ore1960,palmer1997}.

In our case, not all vertices satisfy the conditions of Ore's theorem. However, we will still be able to find Hamiltonian paths in a given graph $F$. Our strategy is based on the following observations:
\begin{enumerate}
\item All backbones are sufficiently long except the shortest three backbones. Actually, only the shortest backbone might have $o(n)$ length, where $n$ is the number of vertices; however, we can easily construct the three shortest backbones. This allows for a better lower bound on the number of vertices necessary to construct the remaining caterpillars.
\item In any tree degree matrix without common leaves, there is at most one vertex (that is, column) with total degree at least $\frac{2n}{3}$ if $n$ is sufficiently large. Actually, for any $c>\frac{1}{2}$, there is at most one vertex whose degree is at least $cn$ if $n$ is sufficiently large.
\item There are at most $11$ vertices whose degrees are larger than $\frac{n}{6}$ if $n$ is sufficiently large. Actually, for any $c'>0$, there are at most a constant number of vertices whose degrees are larger than $c'n$ if $n$ is sufficiently large.
\end{enumerate}
We are going to precisely state and prove these statements below. These observations provides us the following construction strategy.
\begin{enumerate}
\item We first construct the legs of the caterpillars and three shortest backbones. Then we construct all other backbones.
\item To construct the other backbones, we ``cap" all the backbone vertices whose degree is small in $F$, with vertices whose degree is large in $F$.  The vertices with small degree in $F$ are the same vertices with large degree in $\mathbf{D}$. There are constant number of these vertices. Furthermore, at most one of them might have degree larger than $\frac{2n}{3}$ in $\mathbf{D}$. Therefore, at most one of them might have too small of a degree in $F$. We cap this vertex for each backbone at the end of the first phase. Since all other small degree vertices have degree at least $\frac{n}{6}$ in $F$, we can easily find high degree vertices to cap them.
\item We fix the edges used for capping the small degree vertices in $F$, and extend them to a Hamiltonian path. The algorithm to find such a Hamiltonian path is very similar to Palmer's algorithm \cite{palmer1997} to find a Hamiltonian cycle in a graph satisfying the degree conditions in the hypothesis of Ore's theorem \cite{ore1960}.
\end{enumerate} 

Below we state and prove the lemmas concerning the degree properties. The first is a simple observation.
\begin{lemma}\label{lem:very-large-degree}
Let $\mathbf{D} \in \mathbb{N}^{k\times n}$ be a tree degree matrix without common leaves. Assume that $n \ge 6k-5$. Then there exists at most one vertex whose degree is larger than or equal to $\frac{2n}{3}$.
\end{lemma}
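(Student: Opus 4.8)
The plan is to argue by contradiction through a direct double-counting of the entries of $\mathbf{D}$, using essentially one structural fact supplied by the no-common-leaves hypothesis. First I would record that fact precisely: in every column at most one entry can equal $1$, since a value $1$ in row $i$ and column $j$ forbids a $1$ in any other row of that same column; and because each row is a tree degree sequence, every entry is at least $1$. Consequently at least $k-1$ of the $k$ entries in each column are at least $2$, so every column sum is at least $2(k-1)+1 = 2k-1$. This lower bound on the minimum total degree is the only place the no-common-leaves assumption enters.

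Next I would suppose, for contradiction, that two distinct columns $a$ and $b$ both have total degree at least $\tfrac{2n}{3}$. The sum of all $kn$ entries is exactly $k(2n-2)$, since each of the $k$ rows is a tree degree sequence summing to $2n-2$. Bounding the two large columns below by $\tfrac{2n}{3}$ each and each of the remaining $n-2$ columns below by $2k-1$, I obtain
$$
k(2n-2) \ge \frac{2n}{3} + \frac{2n}{3} + (n-2)(2k-1).
$$
Expanding $(n-2)(2k-1) = 2kn - n - 4k + 2$ and cancelling the common $2kn$ term on both sides collapses this to $2k \ge \tfrac{n}{3} + 2$, equivalently $n \le 6k-6$. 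This directly contradicts the hypothesis $n \ge 6k-5$, so at most one column can have total degree at least $\tfrac{2n}{3}$.

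The argument is entirely elementary and I do not expect a genuine obstacle; the only points demanding care are bookkeeping ones. It is essential that the minimum column sum be taken as $2k-1$ rather than $k$, which is exactly where the no-common-leaves condition is indispensable — without it a column of all $1$'s would drop the global lower bound enough to break the inequality. It is also essential that the global total be the exact value $k(2n-2)$ coming from every row summing to $2n-2$. Since the counting yields the threshold $n \le 6k-6$ for a counterexample, the stated hypothesis $n \ge 6k-5$ is precisely sharp for this method, which is a good consistency check on the constant.
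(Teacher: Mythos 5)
Your proof is correct and is essentially the same argument as the paper's: both bound the total degree sum below by $\frac{4n}{3} + (n-2)(2k-1)$ under the assumption of two large-degree columns, compare with the exact total $k(2n-2)$, and derive $n \le 6k-6$, contradicting the hypothesis. The only difference is that you explicitly justify the column-sum lower bound $2k-1$ from the no-common-leaves condition, a step the paper uses implicitly.
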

\begin{proof}
Assume to the contrary there exists at least $2$ vertices with degree at least $\frac{2n}{3}$. Then the total number of degrees is at least
$$
\frac{4n}{3} + (n-2)(2k-1) = 2kn +\frac{n}{3}- 4k +2.
$$
However, the total number of degrees is $k(2n-2)$. If
$$
2kn +\frac{n}{3}-4k  +2 \le k(2n-2),
$$
then
$$
n  \le 6k -6,
$$
a contradiction.
\end{proof}
The number of relatively high degree vertices is also small.
\begin{lemma}
Let $\mathbf{D} \in\mathbb{N}^{k\times n}$ be a tree degree matrix without common leaves. If $n\ge 22k-11$, then there are at most $11$ vertices with degree at least $\frac{n}{6}$.
\end{lemma}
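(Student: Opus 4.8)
The plan is to mimic the counting argument used in Lemma~\ref{lem:very-large-degree}, since the same structural fact drives both statements. The crucial observation is that in a tree degree matrix without common leaves, every column sum (the total degree of a vertex summed over all $k$ trees) is at least $2k-1$: each vertex is a leaf, i.e.\ has degree $1$, in at most one of the $k$ trees, and has degree at least $2$ in each of the remaining trees, so its total degree is at least $1 + 2(k-1) = 2k-1$. On the other hand, the sum of all column sums equals $k(2n-2)$, because each of the $k$ rows is a tree degree sequence and hence sums to $2n-2$. These two facts are exactly what is needed.

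With them in hand, I would argue by contradiction. Suppose there are at least $12$ vertices whose total degree is at least $\frac{n}{6}$. Lower-bounding the contribution of these $12$ vertices by $12 \cdot \frac{n}{6} = 2n$ and the contribution of the remaining $n-12$ vertices by $(n-12)(2k-1)$, the exact total degree must dominate this lower bound, giving
$$
k(2n-2) \ge 2n + (n-12)(2k-1).
$$
Expanding the right-hand side yields $2kn + n - 24k + 12$, so the inequality reduces to $-2k \ge n - 24k + 12$, that is $n \le 22k - 12$. This contradicts the hypothesis $n \ge 22k - 11$, which is the whole of the argument.

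There is essentially no serious obstacle here: the estimate is a direct double count, entirely parallel to the proof of Lemma~\ref{lem:very-large-degree} with the threshold $\frac{2n}{3}$ and count $1$ replaced by $\frac{n}{6}$ and count $11$. The only points requiring a moment's care are justifying the $2k-1$ lower bound on each column sum from the no-common-leaves condition, and noting that using $\frac{n}{6}$ (rather than $\lceil n/6\rceil$) as the per-vertex lower bound is legitimate, since each degree is an integer at least $\frac{n}{6}$ and we only need a real-valued lower bound on the sum. The constant $22k-11$ then appears precisely as the threshold forced by this counting, which is presumably why it is stated in exactly that form.
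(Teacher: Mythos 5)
Your proof is correct and is essentially identical to the paper's own argument: both proceed by contradiction, lower-bound the total degree by $12\cdot\frac{n}{6}+(n-12)(2k-1)$, compare with the exact total $k(2n-2)$, and derive $n\le 22k-12$. Your explicit justification of the $2k-1$ lower bound on column sums (left implicit in the paper) is a welcome addition but changes nothing substantive.
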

\begin{proof}
Assume contrary. Then the sum of the degrees is at least
$$
12\cdot\frac{n}{6} + (n-12)(2k-1) = 2nk +n -24k +12.
$$
The total degree is $k(2n-2)$. Then it holds that
$$
2nk +n -24k + 12 \le k(2n-2)
$$
from which
$$
n \le 22k - 12,
$$
a contradiction.
\end{proof}

We are now ready to state and prove the following theorem.
\begin{theorem}
Let $\mathbf{D}\in \mathbb{N}^{n\times k}$ be a tree degree sequence without common leaves. Assume that $k\ge 5$ and  $n \ge \max\{22k-11, 396\}$. Then  $\mathbf{D}$ has a caterpillar realization. 
\end{theorem}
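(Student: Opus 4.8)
The plan is to construct the $k$ edge-disjoint caterpillars directly, exploiting that $n$ is large relative to $k$, exactly along the lines sketched before the statement. Since a caterpillar is the union of its legs and its backbone, it suffices to decide, for each color, where to hang its legs and then to route its backbone as a Hamiltonian path through the non-leaf vertices of that color using only edges no other color has used. Let $B_i=\{\,j: d_{i,j}\ge 2\,\}$ be the backbone-vertex set of color $i$, write $b_i=|B_i|=n-\ell_i$ where $\ell_i$ is the number of leaves of color $i$, and let $D_j=\sum_c d_{c,j}$ be the total degree of $v_j$. First I would reindex the colors so that $b_1\le b_2\le\cdots\le b_k$; since there are no common leaves, $\sum_i\ell_i\le n$, so the pigeonhole estimate behind Lemma~\ref{lem:inc-cat-paths} gives $\ell_i\le n/i$ after sorting, hence $b_i\ge\frac{i-1}{i}n$. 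In particular every backbone except possibly the shortest three has $b_i\ge\frac34 n$, which is precisely the slack the Hamiltonicity argument below will consume.

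In the first phase I would place all legs of all colors and build the three shortest backbones explicitly. The legs must be mutually edge-disjoint: the only possible clash is that a leg of color $i$ joining a leaf $v_j$ to a backbone vertex $v_w$ coincides with a leg of color $i'$ joining the leaf $v_w$ to $v_j$ (recall $v_j$ is a leaf in at most one color), and with at least $n-11$ light attachment points available a greedy Hall-type assignment avoids all such coincidences. After the legs are fixed, record the residual graph $F_i:=K_{B_i}\setminus U_i$, where $U_i$ collects the already-used edges inside $B_i$. The key bound is
$$
\deg_{F_i}(v_j)\ \ge\ (b_i-1)-\sum_{c\ne i}d_{c,j}\ \ge\ b_i+1-D_j,
$$
valid independently of the order in which the other colors are built. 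Call $v_j$ \emph{light} when $D_j\le n/6$; the two degree lemmas above give at most $11$ non-light vertices and at most one \emph{super-heavy} vertex with $D_j>\frac{2n}{3}$. For light vertices the bound yields $\deg_{F_i}(v_j)\ge b_i+1-n/6$, so for $i\ge4$ any two light vertices have $F_i$-degree sum at least $2b_i+2-n/3\ge b_i-1$, i.e.\ the Ore condition for a Hamiltonian path already holds among the light vertices. The super-heavy vertex may have almost no room in $F_i$, so while the legs are still the only committed edges I would, for every remaining color, reserve two free edges from it to distinct light vertices, capping it once and for all.

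In the second phase I would realize each remaining backbone ($i\ge4$) as a Hamiltonian path of $F_i$. First cap each of the at most ten heavy-but-not-super vertices by fixing two incident $F_i$-edges to light vertices (possible because such a vertex still has $\deg_{F_i}\ge b_i+1-\frac{2n}{3}\ge\frac{n}{12}$), together with the caps of the super-heavy vertex reserved in phase one. After contracting each capped vertex into the length-two path formed by its two cap edges, the remaining instance lives essentially on light vertices, where the Ore condition holds, and a rotation-extension argument in the style of Palmer's algorithm~\cite{palmer1997} produces a Hamiltonian path honoring all fixed cap edges. Hanging the legs on this backbone yields the caterpillar of color $i$, and since every backbone edge is drawn from $F_i$ the $k$ caterpillars are pairwise edge-disjoint.

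The main obstacle is this constrained Hamiltonian-path step: Ore's theorem is available only for the light vertices, so I must show that capping the constantly many heavy vertices and then running a Palmer-type rotation genuinely yields a Hamiltonian path through the prescribed cap edges, verifying that each contraction preserves the degree-sum hypothesis and that no rotation destroys a cap edge. The accompanying bookkeeping — keeping all caps across all $k$ colors edge-disjoint, in particular the roughly $2k$ cap edges incident to the super-heavy vertex, and checking that the thresholds $n\ge 22k-11$ and $n\ge 396$ with $k\ge5$ make every inequality $b_i+1-D_j\ge\cdots$ hold with the stated constants — is the principal technical burden; by contrast the leg assignment and the explicit construction of the three short backbones are routine.
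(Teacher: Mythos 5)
Your proposal follows essentially the same two-phase strategy as the paper's own proof: the same degree thresholds (at most one vertex of total degree at least $\frac{2n}{3}$ by Lemma~\ref{lem:very-large-degree}, at most eleven of degree at least $\frac{n}{6}$), the same backbone-length bound from Lemma~\ref{lem:inc-cat-paths}, the same plan of placing all legs and the three shortest backbones first while capping the super-heavy vertex early and the remaining heavy vertices before each backbone, and the same Palmer-style rotation argument producing each remaining backbone as a Hamiltonian path in the complement graph $F$ honoring the cap edges. The only notable divergence is in Phase~I, where the paper builds the legs, the three shortest backbones, and all edges at the super-heavy vertex simultaneously via the inductive rainbow-matching machinery (Lemma~\ref{lem:caterpillar-induction} together with Lemma~\ref{lem:rainbow-k-4}, starting from the Walecki construction of Lemma~\ref{lem:allpaths}) rather than by a greedy Hall-type leg assignment, which is exactly how it discharges the edge-disjointness you defer as ``routine.''
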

\begin{proof}
We explicitly construct a realization in two phases. In the first phase, we construct the legs of the caterpillars, the backbones of the $3$ shortest backbones and all the remaining edges of the largest degree vertex. In the second phase, we  construct the remaining backbones.

\noindent \emph{Phase I}

Let  $\mathbf{D} = \mathbf{D}_0, \mathbf{D}_1, \ldots, \mathbf{D}_m$ be a series of tree degree matrices, such that $\mathbf{D}_m$ contains only path degree sequences, and for any $l = 0, \ldots m-1$, $\mathbf{D}_{l+1}$ is obtained from $\mathbf{D}_l$ by removing a column containing all $2$'s except in row $i$, where the entry is $1$, and then subtracting $1$ from a $d_{i,j} > 2$.  According to Lemma~\ref{lem:v-vertex-existance}, we can always find a column with column sum $2k-1$ and entry $d_{i,j}$. Matrix $\mathbf{D}_m$ has edge disjoint path realizations, according to Lemma~\ref{lem:allpaths}.

Let $v$ denote the vertex with the largest column sum in $\mathcal{D}$. Furthermore, let the vertices adjacent to leaves in the paths be called ``end vertices".
Let $G$ be a subset of the above-mentioned edge disjoint path realizations of $\mathcal{D}_m$ containing the three paths corresponding to the caterpillars with shortest backbones in $\mathbf{D}$, the legs of the other paths and all edges incident to $v$. Observe that for each color, $v$ has at most two backbone edges. When it is incident to exactly two backbone edges, then at most one of these edges are incident to an end vertex. Furthermore, when $v$ has one backbone edge, that is, when $v$ is an end vertex, then this edge is not incident to another end vertex. We call the backbone edges of $v$ as ``capping edges".

Then going from $\mathbf{D}_m$ to $\mathbf{D}$, we construct the three before-mentioned caterpillars and all the legs of other caterpillars, adding one vertex $v$ to $G$ in each step. For each $\mathbf{D}_l$ to $\mathbf{D}_{l-1}$ transition, if the removed column contains a $1$ in a row not corresponding to the three caterpillars, then add a leg between $v$ and $v_j$ with color $i$ and find a rainbow matching avoiding $v_j$ in the backbones of the $3$ caterpillars and extend these caterpillars by pulling these edges onto $v$. Such rainbow matching exists, according to Lemma~\ref{lem:rainbow-k-4}. If row $i$ contains one of the caterpillars constructed in this phase, then connect $v_j$ to $v$ with color $i$, find a rainbow matching in the backbones of the other caterpillars, and pull them onto $v$. Such rainbow matching exists, according to Lemma~\ref{lem:rainbow-k-4}.

In this way, we construct all legs, the three caterpillars with the shortest backbones and all edges incident to $v$. Furthermore, we put all of these (appropriately colored) edges in $G$. For other backbones, the remaining degree of each vertex (that is, the difference of its degree with color $i$ in $G$ and the corresponding entry in row $i$ of $\mathbf{D}$) is either $1$ or $2$ (except for $v$). The vertices with remaining degree $1$ are exactly the end vertices of the backbones, as their all other degrees are used for legs (and $v$ is an end vertex if it has one capping edge). We are ready to enter Phase II.

\noindent \emph{Phase II}

We construct the backbones in increasing order according to their length. We add these backbones to $G$, and before each backbone, let $F$ denote the complement of $G$ restricted to the current backbone vertices. Since the legs of the current caterpillar are added, the two end vertices of the backbone are prescribed. Our task is to find a Hamiltonian path between these two vertices in $F$. Although the majority of the degrees is large in $F$, we cannot directly apply Ore's theorem, because there might exist a few small degree vertices. We are going to cap the small degree vertices with high degree vertices, and then extend them into a Hamiltonian path.

Let $m$ denote the size of $F$. From Lemma~\ref{lem:inc-cat-paths}, we know that $m$ is larger than $\frac{3n}{4}$. We know only $v$ can have a degree reaching $\frac{2n}{3}$, and at most $11$ vertices $G$ can have degrees reaching $\frac{n}{6}$. If a degree in $G$ is less than $\frac{2n}{3}$, then its degree in $F$ is at least
$$
\frac{3n}{4} -\frac{2n}{3} = \frac{n}{12}.
$$

All other vertices have degree less than $\frac{n}{6}$. Thus, their degree in $F$ is at least 
$$
\frac{3n}{4}-\frac{n}{6} = \frac{7n}{12}.
$$
Therefore, the sum of any two of these high degrees is at least $\frac{7n}{6}$.

If $v$ is a backbone vertex in the current caterpillar,  put its $1$ or $2$ capping edges into the set $E$. Then for each vertex $w$ in the backbone that has degree at least $\frac{n}{6}$ (but less than $\frac{2n}{3}$) in $G$, we distinguish $4$ cases:
\begin{enumerate}
\item Vertex $w$ is incident to a capping edge of $v$, and it is an end vertex. In this case, we do not have to find further capping edges of $w$.
\item Vertex $w$ is incident to a capping edge of $v$, and it is not an end vertex. Then we will find one more capping edge of $w$.
\item Vertex $w$ is not incident to a capping edge of $v$, and it is an end vertex. Then we will find one capping edge of  $w$.
\item Vertex $w$ is not incident to a capping edge of $v$, and it is not an end vertex. Then we will find two capping edges of $w$.
\end{enumerate}

We claim that we can find the necessary one or two neighbor vertices of $w$, denoted by $u_1$ and $u_2$ in $F$ that have high degree in $F$ (at least $\frac{7n}{12}$), not incident to any edge in $E$, and where at most one of them is an endpoint of the backbone.  Such neighbors must exist, because there are at most $30$ forbidden points (the at most $10$ other low degree vertices and for each of them, at most $2$ neighbors incident to their capping edges). However, these low degree vertices have degree at least $\frac{n}{12}$ in $F$, and $n$ is at least $396$. Therefore, there are at least $3$ neighbors which are not forbidden. We can select two of the three such that at most one of them is an endpoint of the backbone. We add edges $(w,u_1)$ and $(w,u_2)$ to $E$.

Now we construct the backbone. Arrange the backbone vertices in a cycle, starting and ending with the endpoints of the backbone such that vertices incident to the same edge in $E$ are neighbors. We set up such a permutation, since the endpoints do not have a common neighbor in edge set $E$. Then we apply an algorithm similar to Palmer's algorithm to construct a Hamiltonian path \cite{palmer1997}. While there remains two neighbor vertices $u_1$ and $u_2$ around the cycle in a clockwise direction not having an edge in $F$, we find a vertex pair $w_1$ and $w_2$ such that they are neighbors around the cycle, not both of them are endpoints of the backbone, and $(u_1,w_1) \in E$, $(u_2, w_2)\in E$, but $(w_1,w_2) \notin E$. 

By pigeonhole principle, such pair of vertices exists. Both $u_1$ and $u_2$ have high degree. The sum of their degrees is at least $\frac{7n}{6}$. So there must exist at least $\frac{n}{6}$ pairs of neighbor vertices such that $(u_1,w_1)\in E$ and $(u_2,w_2)\in E$. There are at most $23$ forbidden pairs from the at most $22$ pairs forming the edges in $E$ and the pair of endpoint vertices. However, $\frac{n}{6} > 23$.

We swap the appropriate arc of the cycle to make $u_1$ be a neighbor of $w_1$, and $u_2$ be a neighbor of $w_2$. With this operation, we decrease the amount of neighbor pairs $u_1,u_2$ around the cycle that do not have an edge between them in $F$. After applying this operation a finite number of times, the number of such neighbors will reach $0$. That is, there is a Hamiltonian path in $F$ between the two endpoint vertices of the backbone. 

Since for each degree sequence we can find a Hamiltonian path in $F$ between the two endpoints of the backbone, we can construct a caterpillar realization of $\mathbf{D}$.
\end{proof}

\section{Discussion}

In this paper, we considered the caterpillar realizations of tree degree matrices. We presented necessary and sufficient conditions when a $2\times n$ tree degree matrix has an edge disjoint caterpillar realization. Starting from $k=3$, it seems extremely hard to find necessary and sufficient condition for a caterpillar realization of a $k\times n$ tree degree matrix. However, the vertices having no common leaves seems to be a sufficient condition, that is, each vertex has degree $1$ in at most one of the degree sequences. We were able to prove that this condition is sufficient when $k \le 4$, or when $n\ge \max\{22k-11, 396\}$. Naturally, $n$ should be at least $2k$, and we also proved that the conjecture is true if it is true for any $n\le 4k-2$. However, it seems difficult to close the gap between $n=2k$ and $n= 4k-2$, though it is well known that the conjecture is true for $n = 2k$ \cite{alspach2008}. 

Since any caterpillar is a tree, our conjecture is also a conjecture for edge disjoint tree realizations. The ``no common leaves" condition forces the column sums to be more-or-less evenly distributed, that is, most of the column sums are $o(n)$. It is an open question if other conditions forcing evenly distributed column sums are sufficient for caterpillar (or edge disjoint tree) realizations. It also an open question of how many common leaves are necessary to find a counterexample of a tree degree matrix that has no caterpillar realizations.

\section*{Appendix}

Up to permutations of degree sequences and vertices, there are $14$ tree degree matrices on at most  $10$ vertices without common leaves. This appendix gives an example caterpillar realization for all of them.

If the number of vertices is $8$, there is only one possible tree degree matrix, each degree sequence is a path degree sequence (case 1).

If the number of vertices is $9$, there are $2$ possible cases: either all degree sequences are path degree sequences (case 2) or there is a degree $3$ (case 3).

If the number of vertices is $10$, there are $11$ possible cases: all degree sequences are path degree sequences (case 4), there is a degree 3 which might be on a vertex with a leaf (case 5) or without a leaf (case 6), there is a degree $4$ (case 7) or there are $2$ degree $3$'s in the degree sequences (cases 8-14).

The two $3$'s might be in the same degree sequence. The leaves on these two vertices might be in the same degree sequence (case 8) or in different degree sequences (case 9).

If the two degree $3$s are in different degree sequences, they might be on the same vertex (case 10) or on different vertices.

If the two degree $3$s are in different sequences, $D_i$ and $D_j$, and on different vertices $u$ and $v$, consider the degrees of $u$ and $v$ in $D_i$ and $D_j$ which are not $3$. They might be both $1$ (case 11), or maybe one of them is $1$ and the other is $2$ (case 12), or both of them are $2$. In this latter case, the degree $1$'s on $u$ and $v$ might be in the same degree sequence (case 13) or in different degree sequences (case 14).

The realizations are represented with an adjacency matrix, in which $0$ denotes the absence of edges, and for each degree sequence $D_i$, $i$ denotes the edges in the realization of $D_i$.

\begin{enumerate}
\item %1
$$\mathbf{D}= \left(
\begin{array}{cccccccc}
1 &2 &2 &2 &1 &2 &2 &2 \\
2 &1 &2 &2 &2 &1 &2 &2 \\
2 &2 &1 &2 &2 &2 &1 &2 \\
2 &2 &2 &1 &2 &2 &2 &1 
\end{array}
\right)
$$

$$
A= \left(
\begin{array}{cccccccc}
0 &1 &2 &2 &3 &3 &4 &4 \\
1 &0 &2 &3 &3 &4 &4 &1 \\
2 &2 &0 &3 &4 &4 &1 &1 \\
2 &3 &3 &0 &4 &1 &1 &2 \\
3 &3 &4 &4 &0 &1 &2 &2 \\
3 &4 &4 &1 &1 &0 &2 &3 \\
4 &4 &1 &1 &2 &2 &0 &3 \\
4 &1 &1 &2 &2 &3 &3 &0 
\end{array}
\right)
$$

\item %2
$$\mathbf{D}= \left(
\begin{array}{ccccccccc}
1 &2 &2 &2 &2 &1 &2 &2 &2 \\
2 &1 &2 &2 &2 &2 &1 &2 &2 \\
2 &2 &1 &2 &2 &2 &2 &1 &2 \\
2 &2 &2 &1 &2 &2 &2 &2 &1 
\end{array}
\right)
$$

$$
A= \left(
\begin{array}{ccccccccc}
0 &1 &2 &2 &3 &3 &4 &4 &0 \\
1 &0 &2 &3 &3 &4 &4 &0 &1 \\
2 &2 &0 &3 &4 &4 &0 &1 &1 \\
2 &3 &3 &0 &4 &0 &1 &1 &2 \\
3 &3 &4 &4 &0 &1 &1 &2 &2 \\
3 &4 &4 &0 &1 &0 &2 &2 &3 \\
4 &4 &0 &1 &1 &2 &0 &3 &3 \\
4 &0 &1 &1 &2 &2 &3 &0 &4 \\
0 &1 &1 &2 &2 &3 &3 &4 &0 
\end{array}
\right)
$$

\item %3
$$\mathbf{D}= \left(
\begin{array}{ccccccccc}
1 &3 &2 &2 &1 &2 &2 &2 &1 \\
2 &1 &2 &2 &2 &1 &2 &2 &2 \\
2 &2 &1 &2 &2 &2 &1 &2 &2 \\
2 &2 &2 &1 &2 &2 &2 &1 &2 
\end{array}
\right)
$$

$$
A= \left(
\begin{array}{ccccccccc}
0 &1 &0 &2 &3 &3 &4 &4 &2 \\
1 &0 &2 &3 &3 &4 &4 &1 &1 \\
0 &2 &0 &3 &4 &4 &1 &1 &2 \\
2 &3 &3 &0 &0 &1 &1 &2 &4 \\
3 &3 &4 &0 &0 &1 &2 &2 &4 \\
3 &4 &4 &1 &1 &0 &2 &0 &3 \\
4 &4 &1 &1 &2 &2 &0 &3 &0 \\
4 &1 &1 &2 &2 &0 &3 &0 &3 \\
2 &1 &2 &4 &4 &3 &0 &3 &0 
\end{array}
\right)
$$

\item %4
$$\mathbf{D}= \left(
\begin{array}{cccccccccc}
1 &2 &2 &2 &2 &1 &2 &2 &2 &2 \\
2 &1 &2 &2 &2 &2 &1 &2 &2 &2 \\
2 &2 &1 &2 &2 &2 &2 &1 &2 &2 \\
2 &2 &2 &1 &2 &2 &2 &2 &1 &2 
\end{array}
\right)
$$

$$
A= \left(
\begin{array}{cccccccccc}
0 &1 &2 &2 &3 &3 &4 &4 &0 &0 \\
1 &0 &2 &3 &3 &4 &4 &0 &0 &1 \\
2 &2 &0 &3 &4 &4 &0 &0 &1 &1 \\
2 &3 &3 &0 &4 &0 &0 &1 &1 &2 \\
3 &3 &4 &4 &0 &0 &1 &1 &2 &2 \\
3 &4 &4 &0 &0 &0 &1 &2 &2 &3 \\
4 &4 &0 &0 &1 &1 &0 &2 &3 &3 \\
4 &0 &0 &1 &1 &2 &2 &0 &3 &4 \\
0 &0 &1 &1 &2 &2 &3 &3 &0 &4 \\
0 &1 &1 &2 &2 &3 &3 &4 &4 &0 
\end{array}
\right)
$$

\item %5
$$\mathbf{D}= \left(
\begin{array}{cccccccccc}
1 &3 &2 &2 &2 &1 &2 &2 &2 &1 \\
2 &1 &2 &2 &2 &2 &1 &2 &2 &2 \\
2 &2 &1 &2 &2 &2 &2 &1 &2 &2 \\
2 &2 &2 &1 &2 &2 &2 &2 &1 &2 
\end{array}
\right)
$$

$$
A= \left(
\begin{array}{cccccccccc}
0 &1 &0 &2 &3 &3 &4 &4 &0 &2 \\
1 &0 &2 &3 &3 &4 &4 &0 &1 &1 \\
0 &2 &0 &3 &4 &4 &0 &1 &1 &2 \\
2 &3 &3 &0 &0 &0 &1 &1 &2 &4 \\
3 &3 &4 &0 &0 &1 &1 &2 &2 &4 \\
3 &4 &4 &0 &1 &0 &2 &2 &3 &0 \\
4 &4 &0 &1 &1 &2 &0 &0 &3 &3 \\
4 &0 &1 &1 &2 &2 &0 &0 &4 &3 \\
0 &1 &1 &2 &2 &3 &3 &4 &0 &0 \\
2 &1 &2 &4 &4 &0 &3 &3 &0 &0 
\end{array}
\right)
$$

\item %6
$$\mathbf{D}= \left(
\begin{array}{cccccccccc}
1 &2 &2 &2 &3 &1 &2 &2 &2 &1 \\
2 &1 &2 &2 &2 &2 &1 &2 &2 &2 \\
2 &2 &1 &2 &2 &2 &2 &1 &2 &2 \\
2 &2 &2 &1 &2 &2 &2 &2 &1 &2 
\end{array}
\right)
$$

$$
A= \left(
\begin{array}{cccccccccc}
0 &1 &0 &2 &3 &3 &4 &4 &0 &2 \\
1 &0 &2 &0 &3 &4 &4 &0 &1 &3 \\
0 &2 &0 &3 &4 &4 &0 &1 &1 &2 \\
2 &0 &3 &0 &4 &0 &1 &1 &2 &3 \\
3 &3 &4 &4 &0 &1 &1 &2 &2 &1 \\
3 &4 &4 &0 &1 &0 &2 &2 &3 &0 \\
4 &4 &0 &1 &1 &2 &0 &3 &3 &0 \\
4 &0 &1 &1 &2 &2 &3 &0 &0 &4 \\
0 &1 &1 &2 &2 &3 &3 &0 &0 &4 \\
2 &3 &2 &3 &1 &0 &0 &4 &4 &0 
\end{array}
\right)
$$

\item %7
$$\mathbf{D}= \left(
\begin{array}{cccccccccc}
1 &4 &2 &2 &1 &2 &2 &2 &1 &1 \\
2 &1 &2 &2 &2 &1 &2 &2 &2 &2 \\
2 &2 &1 &2 &2 &2 &1 &2 &2 &2 \\
2 &2 &2 &1 &2 &2 &2 &1 &2 &2 
\end{array}
\right)
$$

$$
A= \left(
\begin{array}{cccccccccc}
0 &1 &0 &0 &3 &3 &4 &4 &2 &2 \\
1 &0 &2 &3 &3 &4 &4 &1 &1 &1 \\
0 &2 &0 &3 &0 &4 &1 &1 &2 &4 \\
0 &3 &3 &0 &0 &1 &1 &2 &4 &2 \\
3 &3 &0 &0 &0 &1 &2 &2 &4 &4 \\
3 &4 &4 &1 &1 &0 &2 &0 &3 &0 \\
4 &4 &1 &1 &2 &2 &0 &0 &0 &3 \\
4 &1 &1 &2 &2 &0 &0 &0 &3 &3 \\
2 &1 &2 &4 &4 &3 &0 &3 &0 &0 \\
2 &1 &4 &2 &4 &0 &3 &3 &0 &0 
\end{array}
\right)
$$

\item %8
$$\mathbf{D}= \left(
\begin{array}{cccccccccc}
1 &3 &2 &2 &1 &3 &2 &2 &1 &1 \\
2 &1 &2 &2 &2 &1 &2 &2 &2 &2 \\
2 &2 &1 &2 &2 &2 &1 &2 &2 &2 \\
2 &2 &2 &1 &2 &2 &2 &1 &2 &2 
\end{array}
\right)
$$

$$
A= \left(
\begin{array}{cccccccccc}
0 &1 &0 &2 &0 &3 &4 &4 &2 &3 \\
1 &0 &0 &3 &3 &4 &4 &1 &1 &2 \\
0 &0 &0 &3 &4 &4 &1 &1 &2 &2 \\
2 &3 &3 &0 &0 &1 &1 &2 &0 &4 \\
0 &3 &4 &0 &0 &1 &2 &2 &4 &3 \\
3 &4 &4 &1 &1 &0 &2 &0 &3 &1 \\
4 &4 &1 &1 &2 &2 &0 &3 &0 &0 \\
4 &1 &1 &2 &2 &0 &3 &0 &3 &0 \\
2 &1 &2 &0 &4 &3 &0 &3 &0 &4 \\
3 &2 &2 &4 &3 &1 &0 &0 &4 &0 
\end{array}
\right)
$$

\item %9
$$\mathbf{D}= \left(
\begin{array}{cccccccccc}
1 &3 &3 &2 &1 &2 &2 &2 &1 &1 \\
2 &1 &2 &2 &2 &1 &2 &2 &2 &2 \\
2 &2 &1 &2 &2 &2 &1 &2 &2 &2 \\
2 &2 &2 &1 &2 &2 &2 &1 &2 &2 
\end{array}
\right)
$$

$$
A= \left(
\begin{array}{cccccccccc}
0 &1 &0 &0 &3 &3 &4 &4 &2 &2 \\
1 &0 &2 &3 &3 &0 &4 &1 &1 &4 \\
0 &2 &0 &3 &4 &4 &1 &1 &2 &1 \\
0 &3 &3 &0 &0 &1 &1 &2 &4 &2 \\
3 &3 &4 &0 &0 &1 &2 &2 &4 &0 \\
3 &0 &4 &1 &1 &0 &2 &0 &3 &4 \\
4 &4 &1 &1 &2 &2 &0 &0 &0 &3 \\
4 &1 &1 &2 &2 &0 &0 &0 &3 &3 \\
2 &1 &2 &4 &4 &3 &0 &3 &0 &0 \\
2 &4 &1 &2 &0 &4 &3 &3 &0 &0 
\end{array}
\right)
$$

\item %10
$$\mathbf{D}= \left(
\begin{array}{cccccccccc}
1 &3 &2 &2 &1 &2 &2 &2 &1 &2 \\
2 &1 &2 &2 &2 &1 &2 &2 &2 &2 \\
2 &3 &1 &2 &2 &2 &1 &2 &2 &1 \\
2 &2 &2 &1 &2 &2 &2 &1 &2 &2 
\end{array}
\right)
$$

$$
A= \left(
\begin{array}{cccccccccc}
0 &1 &0 &2 &3 &3 &4 &0 &2 &4 \\
1 &0 &2 &3 &3 &4 &4 &1 &1 &3 \\
0 &2 &0 &3 &4 &4 &1 &1 &2 &0 \\
2 &3 &3 &0 &0 &0 &1 &2 &4 &1 \\
3 &3 &4 &0 &0 &1 &0 &2 &4 &2 \\
3 &4 &4 &0 &1 &0 &2 &0 &3 &1 \\
4 &4 &1 &1 &0 &2 &0 &3 &0 &2 \\
0 &1 &1 &2 &2 &0 &3 &0 &3 &4 \\
2 &1 &2 &4 &4 &3 &0 &3 &0 &0 \\
4 &3 &0 &1 &2 &1 &2 &4 &0 &0 
\end{array}
\right)
$$

\item %11
$$\mathbf{D}= \left(
\begin{array}{cccccccccc}
1 &3 &2 &2 &1 &2 &2 &2 &1 &2 \\
3 &1 &2 &2 &2 &1 &2 &2 &2 &1 \\
2 &2 &1 &2 &2 &2 &1 &2 &2 &2 \\
2 &2 &2 &1 &2 &2 &2 &1 &2 &2 
\end{array}
\right)
$$

$$
A= \left(
\begin{array}{cccccccccc}
0 &1 &0 &2 &3 &3 &4 &4 &2 &2 \\
1 &0 &2 &3 &3 &4 &4 &1 &1 &0 \\
0 &2 &0 &3 &0 &4 &1 &1 &2 &4 \\
2 &3 &3 &0 &0 &0 &1 &2 &4 &1 \\
3 &3 &0 &0 &0 &1 &2 &2 &4 &4 \\
3 &4 &4 &0 &1 &0 &2 &0 &3 &1 \\
4 &4 &1 &1 &2 &2 &0 &0 &0 &3 \\
4 &1 &1 &2 &2 &0 &0 &0 &3 &3 \\
2 &1 &2 &4 &4 &3 &0 &3 &0 &0 \\
2 &0 &4 &1 &4 &1 &3 &3 &0 &0 
\end{array}
\right)
$$

\item %12
$$\mathbf{D}= \left(
\begin{array}{cccccccccc}
1 &3 &2 &2 &1 &2 &2 &2 &1 &2 \\
2 &1 &3 &2 &2 &1 &2 &2 &2 &1 \\
2 &2 &1 &2 &2 &2 &1 &2 &2 &2 \\
2 &2 &2 &1 &2 &2 &2 &1 &2 &2 
\end{array}
\right)
$$

$$
A= \left(
\begin{array}{cccccccccc}
0 &0 &0 &2 &3 &3 &4 &4 &2 &1 \\
0 &0 &2 &3 &3 &4 &4 &1 &1 &1 \\
0 &2 &0 &3 &4 &4 &1 &1 &2 &2 \\
2 &3 &3 &0 &0 &1 &1 &2 &0 &4 \\
3 &3 &4 &0 &0 &1 &2 &2 &4 &0 \\
3 &4 &4 &1 &1 &0 &2 &0 &3 &0 \\
4 &4 &1 &1 &2 &2 &0 &0 &0 &3 \\
4 &1 &1 &2 &2 &0 &0 &0 &3 &3 \\
2 &1 &2 &0 &4 &3 &0 &3 &0 &4 \\
1 &1 &2 &4 &0 &0 &3 &3 &4 &0 
\end{array}
\right)
$$

\item %13
$$\mathbf{D}= \left(
\begin{array}{cccccccccc}
1 &3 &2 &2 &1 &2 &2 &2 &1 &2 \\
2 &1 &2 &2 &2 &1 &2 &2 &2 &2 \\
2 &2 &1 &2 &2 &3 &1 &2 &2 &1 \\
2 &2 &2 &1 &2 &2 &2 &1 &2 &2 
\end{array}
\right)
$$

$$
A= \left(
\begin{array}{cccccccccc}
0 &0 &0 &2 &3 &3 &4 &4 &2 &1 \\
0 &0 &2 &3 &3 &4 &4 &1 &1 &1 \\
0 &2 &0 &3 &4 &4 &1 &1 &2 &0 \\
2 &3 &3 &0 &0 &1 &1 &2 &0 &4 \\
3 &3 &4 &0 &0 &1 &0 &2 &4 &2 \\
3 &4 &4 &1 &1 &0 &2 &0 &3 &3 \\
4 &4 &1 &1 &0 &2 &0 &3 &0 &2 \\
4 &1 &1 &2 &2 &0 &3 &0 &3 &0 \\
2 &1 &2 &0 &4 &3 &0 &3 &0 &4 \\
1 &1 &0 &4 &2 &3 &2 &0 &4 &0 
\end{array}
\right)
$$

\item %14
$$\mathbf{D}= \left(
\begin{array}{cccccccccc}
1 &3 &2 &2 &1 &2 &2 &2 &1 &2 \\
2 &1 &2 &2 &2 &1 &2 &2 &2 &2 \\
2 &2 &1 &3 &2 &2 &1 &2 &2 &1 \\
2 &2 &2 &1 &2 &2 &2 &1 &2 &2 
\end{array}
\right)
$$

$$
A= \left(
\begin{array}{cccccccccc}
0 &0 &0 &2 &3 &3 &4 &4 &2 &1 \\
0 &0 &2 &3 &3 &4 &4 &1 &1 &1 \\
0 &2 &0 &3 &4 &0 &1 &1 &2 &4 \\
2 &3 &3 &0 &0 &1 &1 &2 &4 &3 \\
3 &3 &4 &0 &0 &1 &0 &2 &4 &2 \\
3 &4 &0 &1 &1 &0 &2 &0 &3 &4 \\
4 &4 &1 &1 &0 &2 &0 &3 &0 &2 \\
4 &1 &1 &2 &2 &0 &3 &0 &3 &0 \\
2 &1 &2 &4 &4 &3 &0 &3 &0 &0 \\
1 &1 &4 &3 &2 &4 &2 &0 &0 &0 
\end{array}
\right)
$$

\end{enumerate}


\begin{thebibliography}{1}
\bibitem{alspach2008} 
Alspach, B.: The wonderful Walecki construction, Bull. Inst. Combin. Appl., 52:7--20. (2008)


\bibitem{bentzetal2009}
Bentz, C., Costa, M.-C., Picouleau, C., Ries, B., de Werra, D.: 
Degree-constrained edge partitioning in graphs arising from discrete tomography,
J. Graph Algorithms Appl., 13(2):99--118 (2009)


\bibitem{bckm} B\'erzci, K., Changshuo, L., Kir\'aly, Z., Mikl\'os, I. (2017) Packing tree degree sequences I. submitted. %\url{https://arxiv.org/abs/1704.07112}.

\bibitem{eg} Erd{\H o}s, P., Gallai, T. (1960)  Graphs with vertices of prescribed degrees  (in Hungarian) Matematikai Lapok, 11: 264--274.

\bibitem{ghm} Gollakota, A., Hardt, W., Mikl\'os, I. (2017) Packing tree degree sequences II. submitted.% \url{https://arxiv.org/abs/1704.03148}

\bibitem{h-mcdc} Hillebrand, A., McDiarmid, C. (2016) Colour degree matrices of graphs with at most one cycle,
Discrete Appl. Math., 209:144--152.

\bibitem{kundu2trees}
Kundu, S. (1974) Disjoint Representation of Tree Realizable
Sequences. SIAM Journal on Applied Mathematics,
26(1):103--107. 

\bibitem{kundu3trees} Kundu, S. (1975) Disjoint representation of three tree realizable sequences, SIAM J. of Appl.
Math., 28:290--302. 

\bibitem{ore1960} Ore, \O. (1960) Note on {H}amilton circuits,  American Mathematical Monthly, 67 (1): 55.

\bibitem{palmer1997} The hidden algorithm of {O}re's theorem on {H}amiltonian cycles, Computers \& Mathematics with Applications, 34 (11): 113--119.

\end{thebibliography}
\end{document}